\documentclass[11pt]{article}
\usepackage[a4paper, margin=25mm]{geometry}
\usepackage{amsfonts}
\usepackage{amssymb}
\usepackage{amsmath}
\usepackage{amsthm} 
\usepackage{pifont}
\usepackage[dvipdfmx]{graphicx}
\usepackage{float}
\usepackage{enumerate}
\usepackage{ulem}
\usepackage{color}
\usepackage[dvipsnames]{xcolor}
\usepackage{mathrsfs}
\usepackage{fancybox}
\usepackage{bbm}
\usepackage{mathtools} 
\newcommand\indep{\mathrel{\mathpalette\independenT\perp}}
\newcommand\independenT[2]{%
  \mathrel{\rlap{\scalebox{0.8}[1]{$#1#2$}}\mkern3mu\scalebox{0.8}[1]{$#1#2$}}%
}
\theoremstyle{definition}
\newtheorem{theorem}{Theorem}

\newtheorem{definition}{Definition}
\newtheorem{example}{Example}
\newtheorem{lemma}{Lemma}
\newcommand{\at}{\alpha t}
\newcommand{\refset}{\noindent \hangindent=10pt \hangafter=1}
\newcommand{\dd}{\mathrm{d}}
\newcommand{\ee}{\mathrm{e}}

\newcommand{\ii}{\mathrm{i}}

\newcommand{\E}{{\rm E}}
\newcommand*{\Cdot}{\raisebox{-0.25ex}{\scalebox{1.4}{${\cdot}$}}}
\newcommand{\1}{\mbox{1}\hspace{-0.25em}\mbox{l}} 
\newcommand{\mGamma}[2]{
\raise1.5pt\hbox{$\displaystyle \mathop{\Gamma}^{\rm m}$}
\hspace{-1.5pt}_{#1}^{~#2}}

\newcommand{\Levy}{L\'{e}vy }
\allowdisplaybreaks[4]  
\usepackage[numbers]{natbib}
\title{Bayesian Prediction in Gamma Models: \\
Admissibility and Infinitesimal Prediction}
\author{Fumiyasu Komaki \\
Department of Mathematical Informatics, The University of Tokyo\\
RIKEN Center for Brain Science \\
\texttt{komaki@mist.i.u-tokyo.ac.jp}}
\date{}
\begin{document}

\maketitle

\begin{abstract}
We study estimation and prediction in the Gamma model $\mathrm{Ga}(\alpha,\beta)$, where the shape parameter $\alpha$ is known and the scale parameter $\beta$ is unknown, under the Kullback--Leibler loss. For $\alpha\le1$, all scale-invariant estimators of $\beta$ have infinite risk, indicating a qualitative change in the estimation problem at the boundary $\alpha=1$.

Our main result is that the Bayesian predictive density based on the Jeffreys prior is admissible for all $\alpha>0$. This resolves the admissibility problem for Bayesian predictive densities in Gamma models. As a related result, we also establish the admissibility of the corresponding Bayesian estimator for $\alpha>1$.

To prove the predictive admissibility result, we develop an infinitesimal prediction framework based on Gamma processes. This framework naturally leads to a Kullback--Leibler loss for \Levy densities and establishes a connection between predictive distributions and \Levy measures. Under the resulting loss, the Bayesian predictive \Levy density is shown to be the posterior mean \Levy density.

Unlike the normal and Poisson models, infinitesimal prediction in the Gamma model does not reduce to parameter estimation. Instead, it reduces to the estimation of a \Levy density. We relate this phenomenon to mean mixture curvature and discuss it from an information-geometric viewpoint.
\end{abstract}

\noindent
\refset
Keywords:
Bayesian predictive \Levy density;
information geometry;
Jeffreys prior;
Kullback--Leibler loss;
\Levy density estimation;
mean mixture curvature.

\section{Introduction}

We consider estimation and prediction for the Gamma model
$\mathrm{Ga}(\alpha, \beta)$
$(\alpha > 0,\ \beta > 0)$
whose density is given by
\begin{align}
p(x;\alpha,\beta)\frac{\dd x}{x}
=
\frac{1}{\Gamma(\alpha)}
\left( \frac{x}{\beta} \right)^{\alpha}
\exp\left(-\frac{x}{\beta}\right)
\frac{\dd x}{x}.
\label{Gammadensity}
\end{align}
We assume that the shape parameter $\alpha$ is known,
whereas the scale parameter $\beta$ is unknown.
The parameter $\alpha$ may be regarded as a role analogous to a sample size.

Estimation is regarded as a special case of prediction.
A general formulation of prediction problems is as follows.
Suppose that the simultaneous distribution of observables $x$ and $y$
belongs to a parametric family
\[
\{ p(x \mid \xi)p(y \mid \xi) \mid \xi \in \Xi \}.
\]
The parameter $\xi$ is unknown,
and the objective is to predict $y$ based on an observation $x$
by using a predictive density
$\hat{p}(y \mid x)$.

Many studies have shown that Bayesian predictive densities
\[
p_{\pi}(y \mid x)
=
\frac{
\int p(y \mid \xi)p(x \mid \xi)\pi(\xi)\,\dd\xi
}{
\int p(x \mid \bar{\xi})\pi(\bar{\xi})\,\dd\bar{\xi}
}
\]
based on a prior distribution $\pi(\xi)$
often outperform plug-in predictive densities
$p(y \mid \hat{\xi}(x))$,
where $\hat{\xi}(x)$ is an estimator of $\xi$
\citep{Aitchison75, AD75, Geisser93, K96}.

If we adopt a plug-in density
$p(y \mid \hat{\xi}(x))$
as a predictive density,
its loss can naturally be regarded as the loss of the estimator
$\hat{\xi}(x)$ itself.
In this sense, predictive distribution theory
provides a natural extension of estimation theory
under the Kullback--Leibler loss.

We evaluate the performance of a predictive density
$\hat{p}(y \mid x)$
by using the Kullback--Leibler divergence
\begin{align*}
D\{p(y \mid \xi), \hat{p}(y \mid x)\}
=
\int p(y \mid \xi)
\log
\frac{
p(y \mid \xi)
}{
\hat{p}(y \mid x)
}
\,\dd y.
\end{align*}
The corresponding risk function is defined by
\begin{align*}
R(\xi,\hat{p})
&=
\E\!\left[
D\{p(y \mid \xi),\hat{p}(y \mid x)\}
\,\middle|\, \xi
\right]
=
\int p(x \mid \xi)
\int p(y \mid \xi)
\log
\frac{
p(y \mid \xi)
}{
\hat{p}(y \mid x)
}
\,\dd y\,\dd x.
\end{align*}

A predictive density $p_1(y \mid x)$ is said to dominate
another predictive density $p_2(y \mid x)$
if it has no larger risk for any parameter value
and strictly smaller risk for some parameter value.
For example, in the normal model $\mathrm{N}(\mu,\sigma^2=1)$,
the Bayesian predictive density based on the Lebesgue prior (the Jeffreys prior) $\pi(\mu)=1$
dominates the plug-in predictive density based on the maximum likelihood estimator $\hat{\mu}=x$.
A survey of early results on predictive densities
is given in \cite{GLX12}.


The Gamma model \eqref{Gammadensity} forms a one-dimensional exponential family.
Indeed,
\begin{align*}
p(x \mid \beta)\,\dd x
=
\exp\left\{
\alpha
\left(
-\frac{1}{\beta}\frac{x}{\alpha}
-\log \beta
\right)
\right\}
\frac{x^{\alpha-1}}{\Gamma(\alpha)}
\,\dd x.
\end{align*}
The natural parameter is
$\theta=\frac{1}{\beta}$,
and the expectation parameter is
$\eta = \E_\beta [-x/\alpha ] = -\beta$.
The potential function is
$\psi(\theta)
=
-\log \theta
=
\log \beta.$


An estimator $\hat{\beta}(x)$ is said to be scale invariant if
$\hat{\beta}(cx)
=
c\hat{\beta}(x)$
$(c>0)$.
If we define
$b:=\hat{\beta}(1)$,
then scale invariance implies
$\hat{\beta}(x)
=
\hat{\beta}(x\cdot 1)
=
x\hat{\beta}(1)
=
bx$.
Therefore every scale invariant estimator is necessarily of the form
\[
\hat{\beta}_b(x)=bx,
\qquad b>0.
\]
The maximum likelihood estimator of $\beta$ is
$\hat{\beta}_{\mathrm{MLE}}
= x/\alpha$.

Let $y$ be distributed according to $\mathrm{Ga}(\alpha,\beta)$ and is independent of $x$.
The Kullback--Leibler divergence between
$p(y\mid \beta)$
and
$p(y\mid \hat{\beta})$
is
\begin{align*}
D (\beta , \, \hat{\beta})
&= \E_{y \mid \beta} \left[
\log \frac{
\displaystyle \frac{1}{\Gamma (\alpha)} \Bigl(\frac{y}{\beta} \Bigr)^\alpha \exp \Bigl(-\frac{y}{\beta} \Bigr)
}
{
\displaystyle \frac{1}{\Gamma (\alpha)} \Bigl(\frac{y}{\hat{\beta}} \Bigr)^\alpha \exp \Bigl(- \frac{y}{\hat{\beta}} \Bigr)
}
\right]
= \E_{y \mid \beta} \left[ \frac{y}{\hat{\beta}} - \frac{y}{\beta} + \alpha \log \Bigl(\frac{\hat{\beta}}{\beta} \Bigr) \right] 
= \alpha \left(\frac{\beta}{\hat{\beta}} - 1 - \log \frac{\beta}{\hat{\beta}} \right) .
\end{align*}

For the scale invariant estimator
\[
\hat{\beta}_b(x)=bx,
\]
we obtain
\begin{align*}
D(\beta,\hat{\beta}_b)
=
\alpha
\left(
\frac{\beta}{bx}
-
1
-
\log\frac{\beta}{bx}
\right).
\end{align*}
The corresponding risk function is
\begin{align*}
R(\beta,\hat{\beta}_b)
&=
\E_{x\mid\beta}
\left[
D(\beta,\hat{\beta}_b(x))
\right]
=
\E_{x\mid\beta}
\left[
\alpha
\left(
\frac{\beta}{bx}
-
1
-
\log\frac{\beta}{bx}
\right)
\right].
\end{align*}
Since
\[
\E_{x\mid\beta}\!\left[\frac{1}{x}\right]
=
\begin{cases}
\infty,
& 0<\alpha\le1,
\\[2mm]
\displaystyle
\frac{1}{(\alpha-1)\beta},
& \alpha>1,
\end{cases}
~~\mbox{and}~~~
\E_{x\mid\beta}
\left[
\log\frac{x}{\beta}
\right]
=
\frac{\Gamma'(\alpha)}{\Gamma(\alpha)}
=
\psi(\alpha),
\]
where $\psi$ denotes the digamma function,
we obtain
\begin{align*}
R(\beta,\hat{\beta}_b)
=
\begin{cases}
\infty,
& 0<\alpha\le1,
\\[2mm]
\displaystyle
\alpha
\left\{
\frac{1}{b(\alpha-1)}
-
1
+
\log b
-
\psi(\alpha)
\right\},
& \alpha>1.
\end{cases}
\end{align*}

For $\alpha>1$,
the risk is minimized at
$b=1/(\alpha-1)$,
and hence the best invariant estimator is
\begin{align}
\label{JE} 
\hat{\beta}_\mathrm{J}(x) = \hat{\beta}_{1 / (\alpha - 1)} (x) = \frac{x}{\alpha - 1},
\end{align}
which is also the posterior mean
with respect to the Jeffreys prior
\[
\pi_{\mathrm{J}}(\beta)\,\dd\beta
=
\frac{1}{\beta}\,\dd\beta.
\]
\cite{PN96} showed that the estimator \eqref{JE}
is admissible provided
$\alpha>1+\varepsilon$
for some $\varepsilon>0$.
On the other hand,
when $0<\alpha\le1$,
every scale invariant estimator has infinite risk.

Consider now a constant estimator
\[
\tilde{\beta}_c(x)=c,
\qquad c>0.
\]
Its risk is
\begin{align*}
R(\beta,\tilde{\beta}_c)
&=
D(\beta,\tilde{\beta}_c)
=
\alpha
\left(
\frac{\beta}{c}
-
1
-
\log\frac{\beta}{c}
\right)
<\infty.
\end{align*}
Hence,
for $0<\alpha\le1$,
every constant estimator dominates all scale invariant estimators.
Since constant estimators are trivially admissible,
this provides a simple example of a one-dimensional shrinkage phenomenon.

Next, consider the problem of prediction of $y$ independently
distributed according to $\mathrm{Ga}(T \alpha,\beta)$,
where $T>0$ is known, by using observation $x$.
Under parametric restrictions on the scale parameter,
the dominance and minimaxity properties of predictive densities
have been investigated in detail \cite{LMKS17}.
Despite these developments, the admissibility of Bayesian predictive densities appears to have remained unresolved.

Under the Jeffreys prior, the marginal density of $x$ is
\begin{align}
\label{marginalJ}
m_\mathrm{J} (x) = \int \frac{1}{\Gamma (\alpha)}
\left(\frac{x}{\beta} \right)^\alpha \exp \left(-\frac{x}{\beta} \right) \frac{\dd \beta}{\beta} 
= 1,
\end{align}
and the joint marginal density of $(x,y)$ is
\begin{align*}
m_{\mathrm{J}}(x,y)
&=
\int
\frac{1}{\Gamma(T \alpha)}
\left(\frac{y}{\beta}\right)^{T \alpha}
\exp\left(-\frac{y}{\beta}\right)
\frac{1}{\Gamma(\alpha)}
\left(\frac{x}{\beta}\right)^\alpha
\exp\left(-\frac{x}{\beta}\right)
\frac{\dd\beta}{\beta}
\\
&=
\frac{\Gamma(\alpha+T\alpha)}
{\Gamma(\alpha)\Gamma(T\alpha)}
\frac{x^\alpha y^{T\alpha}}
{(x+y)^{\alpha+T\alpha}}.
\end{align*}
The posterior density is
\begin{align}
\label{Jpost}
\pi_\mathrm{J} (\beta \mid x) \frac{\dd \beta}{\beta} = 
\frac{1}{\Gamma (\alpha)}
\left(\frac{x}{\beta} \right)^\alpha \exp \left(-\frac{x}{\beta} \right)
\frac{\dd \beta}{\beta},
\end{align}
and the Bayesian predictive density is
\begin{align}
p_{\mathrm{J}}(y\mid x) \frac{\dd y}{y}
&=
\frac{m_{\mathrm{J}}(x,y)}
{m_{\mathrm{J}}(x)}
\frac{\dd y}{y}
=
\frac{\Gamma(\alpha+T\alpha)}
{\Gamma(\alpha)\Gamma(T\alpha)}
\frac{x^\alpha y^{T\alpha}}
{(x+y)^{\alpha+T\alpha}}
\frac{\dd y}{y},
\label{Jpredictive}
\end{align}
which is an $F$-distribution.

Then, the Kullback--Leibler loss
\begin{align*}
\E_{y\mid\beta}
\Biggl[
\log
\frac{
\frac{1}{\Gamma(T\alpha)}
\bigl(\frac{y}{\beta}\bigr)^{T\alpha}
\exp(-y/\beta)
}{
\displaystyle
p_{\mathrm{J}}(y\mid x)
}
\Biggr]
=
\log
\frac{
\Gamma(\alpha)
}{
\Gamma(\alpha+T\alpha)x^\alpha\beta^{T\alpha}
}
-
\alpha
+
\E_{y\mid\beta}
\left[
(\alpha+T\alpha)\log(x+y)
\right],
\end{align*}
is finite and the corresponding risk is finite for all $\alpha>0$.
This is in sharp contrast to the estimation problem,
where every scale invariant estimator has infinite risk
when $0<\alpha\le1$.
The boundary $\alpha =1$
marks the onset of a breakdown of regular estimation structure,
whereas Bayesian prediction remains stable
for all $\alpha>0$.

In Section~2,
we prove
that the Bayes estimator based on the Jeffreys prior
is admissible for all $\alpha>1$ by removing the condition
``for some $\varepsilon>0$'' assumed in \cite{PN96}.
In Section~3,
we establish the main result of the paper:
the Bayesian predictive density
based on the Jeffreys prior
is admissible for all $\alpha>0$.
In Section~4,
we show that infinitesimal Bayesian prediction
corresponds not to the estimation of parameters,
but rather to the estimation of the \Levy density.
In Section 5,
we investigate,
from the viewpoint of information geometry,
the fact that prediction in the Gamma model
does not reduce to parameter estimation,
by evaluating the mean mixture curvature.

\section{Admissibility of estimators in the Gamma model}

In this section,
we prove directly, by using Blyth's method,
that the Bayes estimator based on the Jeffreys prior
is admissible under the Kullback--Leibler loss
for all $\alpha>1$.
This removes the assumption that $\alpha$ is separated from the boundary value $\alpha=1$,
which appeared in the form
$\alpha>1+\varepsilon$ for some $\varepsilon>0$
in Theorem 3.3 of \cite{PN96}.

When $0<\alpha\le1$,
the estimation problem undergoes a breakdown of the regular estimation structure.
In contrast,
as we shall see in the next section,
the corresponding prediction problem remains structurally stable.

\begin{theorem}
Suppose that $\alpha>1$.
Then the estimator
\begin{align}
\hat{\beta}_{\mathrm J}(x)
=
\frac{x}{\alpha-1}
\label{Jbetaest}
\end{align}
is admissible and minimax
under the Kullback--Leibler loss.
\end{theorem}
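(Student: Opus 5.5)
We would prove admissibility by Blyth's method, using a sequence of proper (or finite) priors that approximate the Jeffreys prior $\pi_{\mathrm J}(\beta)\,\dd\beta=\dd\beta/\beta$ on the logarithmic scale. Minimaxity then follows from admissibility together with constancy of the risk. By the computation of $R(\beta,\hat\beta_b)$ above, the risk of \eqref{Jbetaest} equals $\alpha\{1-\log(\alpha-1)-\psi(\alpha)\}$ for every $\beta$. An admissible estimator with constant risk is minimax, since any estimator with strictly smaller maximum risk would dominate it.

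\emph{Step 1 (Bayes rules).} Because $D(\beta,\hat\beta)=\alpha(\beta/\hat\beta-1-\log\beta-\ldots)$ is linear in $\beta$ apart from $\log\beta$, minimizing the posterior expected loss gives the posterior mean $\hat\beta_\pi(x)=\E_\pi[\beta\mid x]$ for any prior $\pi$. Take $u=\log\beta$ and priors $\pi_n(\beta)\,\dd\beta=h_n(\log\beta)^2\,\dd\beta/\beta$. Here $h_n$ is the standard logarithmic cutoff: $h_n(u)=1$ for $|u|\le1$, $h_n(u)=1-\log|u|/\log n$ for $1\le|u|\le n$, and $h_n(u)=0$ otherwise. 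Then $h_n\to1$ pointwise and $\int (h_n')^2\,\dd u\to0$.

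\emph{Step 2 (Bayes risk difference).} The loss is affine in $\beta/\hat\beta$ up to a $\log\hat\beta$ term. Hence the difference of integrated risks of $\hat\beta_{\mathrm J}$ and $\hat\beta_{\pi_n}$ collapses to
\[
\Delta_n=\alpha\int m_n(x)\left\{\frac{\hat\beta_{\pi_n}(x)}{\hat\beta_{\mathrm J}(x)}-1-\log\frac{\hat\beta_{\pi_n}(x)}{\hat\beta_{\mathrm J}(x)}\right\}\dd x .
\]
We then bound $t-1-\log t\le (t-1)^2/t$. This leaves
$\int \bigl(\hat\beta_{\pi_n}-\hat\beta_{\mathrm J}\bigr)^2 m_n/(\hat\beta_{\pi_n}\hat\beta_{\mathrm J})\,\dd x$ to control.

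\emph{Step 3 (Stein identity and Cauchy--Schwarz).} Under the Jeffreys posterior \eqref{Jpost}, $1/\beta$ is Gamma with shape $\alpha$ and rate $x$. An integration by parts in $u=\log\beta$ then gives
\[
\int\bigl(\beta-\tfrac{x}{\alpha-1}\bigr)g(\beta)\,p(x\mid\beta)\frac{\dd\beta}{\beta}=\frac{1}{\alpha-1}\int \beta\, g'(\beta)\beta\,p(x\mid\beta)\frac{\dd\beta}{\beta}
\]
for suitable $g$. Applying this with $g=h_n^2$ yields
\[
\hat\beta_{\pi_n}-\hat\beta_{\mathrm J}=\frac{2}{\alpha-1}\frac{\int\beta\,h_n h_n'(\log\beta)\,p\,\dd\beta/\beta}{\int h_n^2\,p\,\dd\beta/\beta}.
\]
Cauchy--Schwarz (splitting $\beta h_nh_n'$ into $h_n\sqrt\beta$ and $h_n'\sqrt\beta$) gives
\[
(\hat\beta_{\pi_n}-\hat\beta_{\mathrm J})^2 m_n\le \frac{4}{(\alpha-1)^2}\,\hat\beta_{\pi_n}\int\beta(h_n')^2p\,\frac{\dd\beta}{\beta}.
\]
Dividing by $\hat\beta_{\pi_n}\hat\beta_{\mathrm J}=\hat\beta_{\pi_n}x/(\alpha-1)$ and integrating in $x$ via Fubini uses $\int (\beta/x)\,p(x\mid\beta)\,\dd x=1/(\alpha-1)$. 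We obtain
\[
\Delta_n\le C_\alpha\int (h_n'(u))^2\,\dd u\to0 .
\]
Because $\pi_n(\{|\log\beta|\le1\})$ stays bounded below, the standard Blyth argument yields admissibility.

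\emph{Main obstacle.} The delicate point is the finiteness of the moments used when $\alpha$ is close to $1$. In particular, $\E_{x\mid\beta}[\beta/x]$ requires exactly $\alpha>1$, and the Cauchy--Schwarz split must be arranged so that only first-order quantities in $\beta/x$ appear. A naive split produces $\beta^2$ and would need $\alpha>2$ or $\alpha>1+\varepsilon$, as in \cite{PN96}. I would first try the split above and verify carefully that the boundary terms in the integration by parts vanish for compactly supported $h_n$. If that fails, I would weight by $\beta^{1/2}$ differently or work directly in the natural parameter $\theta=1/\beta$.
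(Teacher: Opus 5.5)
Your proof is correct and follows essentially the same route as the paper. Both use Blyth's method with logarithmic cutoffs of the Jeffreys prior, the integration-by-parts identity expressing $\hat\beta_{\pi_n}-\hat\beta_{\mathrm J}$ through $h_nh_n'$, the same Cauchy--Schwarz split, and Fubini with $\E_{x\mid\beta}[\beta/x]=1/(\alpha-1)$, which is exactly where $\alpha>1$ enters.

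The differences are cosmetic. You work in $\beta$ rather than $\theta=1/\beta$. You use the exact identity $\Delta_n=\alpha\int m_n(t-1-\log t)$ with $t-1-\log t\le(t-1)^2/t$, where the paper uses $\log t\le t-1$ followed by a second integration by parts; both give the same intermediate bound. Your cutoff is of Brown--Hwang type in $u=\log\beta$, where the paper uses a linear tent in $\log\theta$.

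There are two harmless slips.
\begin{itemize}
\item The constant risk of $\hat\beta_{\mathrm J}$ is $\alpha\{\psi(\alpha)-\log(\alpha-1)\}$, not $\alpha\{1-\log(\alpha-1)-\psi(\alpha)\}$. Only constancy matters for the minimaxity argument, so this does not affect it.
\item Blyth's method needs $\pi_n$ to be bounded below on every bounded interval in $\log\beta$, not just on $|\log\beta|\le1$. This holds because $h_n\uparrow 1$ pointwise.
\end{itemize}
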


\vspace{0.5cm}

\noindent
{\it Proof.}
Throughout the proof,
we assume that $\alpha>1$.
Since $\hat{\beta}_{\mathrm J}$ has constant risk,
its admissibility implies minimaxity.
Hence, it suffices to prove admissibility.

Under the Kullback--Leibler loss,
the generalized Bayes estimator of an expectation parameter
in an exponential family is given by the posterior mean.
Hence, the generalized Bayes estimator of $\beta$
is the posterior mean of $\beta$.
The generalized Bayes estimator of $\theta:=\beta^{-1}$ with respect to the Jeffreys prior
$\beta^{-1}\,\dd\beta = \theta^{-1}\,\dd\theta$ is
\[
\hat{\theta}_{\mathrm J}
=
\frac{\alpha-1}{x},
\]
since the Kullback--Leibler loss is invariant
under the one-to-one transformation $\theta=\beta^{-1}$.
It suffices to prove the admissibility of
$\hat{\theta}_{\mathrm J}$ to prove the admissibility of
$\hat{\beta}_{\mathrm J}$.

To apply Blyth's method,
consider the following increasing sequence of finite but non-normalized priors
\begin{align}
g_l(\beta)\beta^{-1}\,\dd\beta
=
\tilde g_l(\theta)\theta^{-1}\,\dd\theta ~~~ (l=1,2,3,\ldots),
\qquad
\tilde g_l(\theta):=g_l(1/\theta),
\label{estpseq}
\end{align}
where
\begin{equation*}
\tilde g_l(\theta)
=
\frac12\{h_l(\theta)\}^2,
\end{equation*}
and
\begin{equation*}
h_l(\theta)
=
\begin{cases}
0,
& \theta\le1/l,
\\[2mm]
\displaystyle
1+\frac{\log\theta}{\log l},
& 1/l\le\theta\le1,
\\[4mm]
\displaystyle
1-\frac{\log\theta}{\log l},
& 1\le\theta\le l,
\\[4mm]
0,
& l\le\theta.
\end{cases}
\end{equation*}
The function $h_l$
is a modification of the function used by
\cite{BH82},
adapted so as to decay on both sides.

Let $\hat{\beta}_l$
denote the Bayes estimator of $\beta$
with respect to the prior
$g_l(\beta)\beta^{-1}\dd\beta$.
Then
\begin{align*}
\hat{\beta}_l
&=
\frac{
\int
\beta
\frac{1}{\Gamma(\alpha)}
\bigl(\frac{x}{\beta}\bigr)^\alpha
\exp \bigl( -\frac{x}{\beta} \bigr)
g_l(\beta)
\frac{\dd\beta}{\beta}
}{
\int
\frac{1}{\Gamma(\alpha)}
\bigl(\frac{x}{\beta}\bigr)^\alpha
\exp \bigl( - \frac{x}{\beta} \bigr)
g_l(\beta)
\frac{\dd\beta}{\beta}
}
=
\frac{
x
\int
(x\theta)^{\alpha-1}
\exp(-x\theta)
\tilde g_l(\theta)
\frac{\dd\theta}{\theta}
}{
\int
(x\theta)^\alpha
\exp(-x\theta)
\tilde g_l(\theta)
\frac{\dd\theta}{\theta}
}
\end{align*}
since the Bayes estimator is the posterior mean.
Thus, the Bayes estimator of $\theta$ is
\begin{align}
\label{ucl}
\hat{\theta}_l =
\frac{1}{\hat{\beta}_l} =
\frac{\frac{1}{x} \int (x \theta)^{\alpha} \exp (- x \theta) \tilde{g}_l (\theta) \frac{\dd \theta}{\theta}}
{\int (x \theta)^{\alpha - 1} \exp (- x \theta) \tilde{g}_l (\theta) \frac{\dd \theta}{\theta}}.
\end{align}
The numerator of \eqref{ucl} is
\begin{align*}
\int (x \theta)^{\alpha - 1} \exp (- x \theta) \tilde{g}_l (\theta) \dd \theta
&= \frac{\alpha - 1}{x} \int (x \theta)^{\alpha - 1} \exp (- x \theta) \tilde{g}_l (\theta) \frac{\dd \theta}{\theta}
+ \frac{1}{x^2} \int (x \theta)^{\alpha} \exp (- x \theta) {\tilde{g}'_l} (\theta) \frac{\dd \theta}{\theta},
\end{align*}
where integration by parts is used.
Since $h_l$ has compact support,
the boundary terms vanish.
Therefore,
\begin{align}
\hat{\theta}_l-\hat{\theta}_{\mathrm J}
=
\frac1{x^2}
\frac{
\int
(x\theta)^\alpha
\exp(-x\theta)
\tilde g_l'(\theta)
\frac{\dd\theta}{\theta}
}{
\int
(x\theta)^{\alpha-1}
\exp(-x\theta)
\tilde g_l(\theta)
\frac{\dd\theta}{\theta}
}.
\label{difference}
\end{align}

From
\begin{align*}
\frac{1}{\alpha}D(\theta,\hat{\theta})
=
\frac{\hat{\theta}}{\theta}
-
1
-
\log\frac{\hat{\theta}}{\theta},
\end{align*}
we obtain
\begin{align*}
&
\frac{\theta}{\alpha}
D(\theta,\hat{\theta}_{\mathrm J})
-
\frac{\theta}{\alpha}
D(\theta,\hat{\theta}_l)
=
\hat{\theta}_{\mathrm J}
-
\hat{\theta}_l
+
\theta
\log
\frac{\hat{\theta}_l}{\hat{\theta}_{\mathrm J}}
\le
\hat{\theta}_{\mathrm J}
-
\hat{\theta}_l
+
\theta
\left(
\frac{
\hat{\theta}_l-\hat{\theta}_{\mathrm J}
}{
\hat{\theta}_{\mathrm J}
}
\right),
\end{align*}
where the inequality
$\log t\le t-1$
is used.

The prior sequence \eqref{estpseq}
satisfies the assumptions of Blyth's method; see, e.g., \cite[p.~158]{Schervish95}.
We evaluate the difference between the Bayes risks of
$\hat{\theta}_\mathrm{J}$ and $\hat{\theta}_l$
with respect to this unnormalized prior sequence.
The Bayes risk difference is given by
\begin{align}
&\iint \frac{1}{\Gamma (\alpha)} (\theta x)^\alpha \exp (- \theta x) \frac{\alpha}{\theta} \frac{\theta}{\alpha}
\{D (\theta , \hat{\theta}_\mathrm{J}) - D (\theta , \hat{\theta}_l) \} \frac{\dd x}{x} \tilde{g}_l (\theta) \frac{\dd \theta}{\theta} \notag \\
&\leq \frac{\alpha}{\Gamma (\alpha)} \iint (\theta x)^\alpha \exp (- \theta x)
\left\{\hat{\theta}_\mathrm{J} - \hat{\theta}_l + \theta \left(\frac{\hat{\theta}_l - \hat{\theta}_\mathrm{J}}{\hat{\theta}_\mathrm{J}} \right) \right\}
\frac{\dd x}{x} \tilde{g}_l (\theta) \theta^{- 2} \dd \theta \notag \\
&= \frac{\alpha}{\Gamma (\alpha)} \iint (\theta x)^\alpha \exp (- \theta x)
\left(- 1 + \frac{\theta x}{\alpha - 1} \right) \tilde{g}_l (\theta) \theta^{- 2} \dd \theta
(\hat{\theta}_l - \hat{\theta}_\mathrm{J}) \frac{\dd x}{x} \notag \\
&= \frac{\alpha}{\Gamma (\alpha)} \int x^2 \int
\biggl\{\frac{1}{\alpha - 1} (\theta x)^{\alpha - 1} \tilde{g}_l (\theta) \exp (- \theta x)
 - (\theta x)^{\alpha - 2} \tilde{g}_l (\theta) \exp (- \theta x) \biggr\} \dd \theta
(\hat{\theta}_l - \hat{\theta}_\mathrm{J}) \frac{\dd x}{x}. \notag \\
&= \frac{\alpha}{\Gamma (\alpha)} \int x^2 \int
\biggl\{\frac{1}{\alpha - 1} (\theta x)^{\alpha - 1}
\tilde{g}_l (\theta) \exp (- \theta x)
+ \frac{1}{x} \frac{1}{\alpha - 1} (\theta x)^{\alpha - 1}
\tilde{g}'_l (\theta) \exp (- \theta x) \notag \\
& ~~~~~~~~~~~~~~~~~~~~~~~~~
+ \frac{1}{x} \frac{1}{\alpha - 1} (\theta x)^{\alpha - 1} \tilde{g}_l (\theta) (- x) \exp (- \theta x) \biggr\}
\dd \theta (\hat{\theta}_l - \hat{\theta}_\mathrm{J}) \frac{\dd x}{x} \notag \\
&= \frac{\alpha}{\Gamma (\alpha)} \int x^2 \int \frac{1}{x} \frac{1}{\alpha - 1} (\theta x)^{\alpha - 1}
\tilde{g}'_l (\theta) \exp (- \theta x) \dd \theta (\hat{\theta}_l - \hat{\theta}_\mathrm{J}) \frac{\dd x}{x}
\label{tochu0}
\end{align}
From \eqref{difference},
together with
\[
\tilde{g}_l (\theta) = \frac{1}{2} \{h_l (\theta) \}^2, \qquad
\tilde{g}'_l (\theta) = h_l (\theta) h'_l (\theta),
\]
the last expression in \eqref{tochu0} can be written as
\begin{align*}
&\frac{\alpha}{\Gamma (\alpha)}
\int \frac{1}{\alpha - 1} \int (\bar{\theta} x)^\alpha \exp (- \bar{\theta} x) \tilde{g}'_l (\bar{\theta})
\frac{\dd \bar{\theta}}{\bar{\theta}}
\frac{1}{x^2} \frac{\int (x \theta)^\alpha \exp (- x \theta) \tilde{g}'_l (\theta) \frac{\dd \theta}{\theta}}
{\int (x \theta)^{\alpha - 1} \exp (- x \theta) \tilde{g}_l (\theta) \frac{\dd \theta}{\theta}} \frac{\dd x}{x} \\
&= \frac{\alpha}{\Gamma (\alpha)} \int \frac{x^{-2}}{\alpha - 1}
\frac{\{\int(x\theta)^\alpha \exp (-x\theta) \tilde{g}'_l (\theta) \frac{\dd \theta}{\theta} \}^2}
{\int (x \theta)^{\alpha - 1} \exp (- x \theta) \tilde{g}_l (\theta) \frac{\dd \theta}{\theta}} \frac{\dd x}{x} \\
&= \frac{2 \alpha}{\Gamma (\alpha) (\alpha - 1)} \int x^{-2}
\frac{\Bigl[\int (x \theta)^{\alpha - 1} \exp (- x \theta)
\{(x \theta) h'_l (\theta) \} h_l (\theta) \frac{\dd \theta}{\theta} \Bigr]^2}
{\int (x \theta)^{\alpha - 1} \exp (- x \theta) \{h_l (\theta) \}^2 \frac{\dd \theta}{\theta}} \frac{\dd x}{x}.
\end{align*}
By the Cauchy--Schwarz inequality,
\begin{align*}
\frac{2 \alpha}{\Gamma (\alpha) (\alpha - 1)} & \int x^{-2}
\frac{\Bigl[\int (x \theta)^{\alpha - 1} \exp (- x \theta) \{(x \theta) h'_l (\theta) \}
h_l (\theta) \frac{\dd \theta}{\theta} \Bigr]^2}
{\int (x \theta)^{\alpha - 1} \exp (- x \theta) \{h_l (\theta) \}^2 \frac{\dd \theta}{\theta}} \frac{\dd x}{x} \\
\leq&\ \frac{2 \alpha}{\Gamma (\alpha) (\alpha - 1)} \int x^{-2}
\int (x \theta)^{\alpha - 1} \exp (- x \theta) \{(x \theta) h'_l (\theta) \}^2 \frac{\dd \theta}{\theta} \frac{\dd x}{x} \\
=&\ \frac{2 \alpha}{\Gamma (\alpha) (\alpha - 1)} \int \theta^{2} 
\int
(x \theta)^{\alpha - 1} \exp (- x \theta) \{h'_l (\theta) \}^2 \frac{\dd x}{x} \frac{\dd \theta}{\theta} \\
=&\ \frac{2 \alpha}{\Gamma (\alpha) (\alpha - 1)} \int \theta^2 \Gamma (\alpha - 1) \{h'_l (\theta) \}^2 \frac{\dd \theta}{\theta} \\
=&\ \frac{2 \alpha}{(\alpha - 1)^2} \int^\infty_0 \theta \{h'_l (\theta) \}^2 \dd \theta.
\end{align*}
Since
\begin{equation*}
h'_l (\theta) =
\begin{cases} 0 & (\theta \leq \frac{1}{l}) \\
\frac{1}{\log l}\frac{1}{\theta} & (\frac{1}{l} \leq \theta \leq 1) \\
- \frac{1}{\log l}\frac{1}{\theta} & (1 \leq \theta \leq l) \\
0 & (l \leq \theta),
\end{cases}
\end{equation*}
we have
\begin{align*}
\int^\infty_ 0 \theta \{h'_l (\theta) \}^2 \dd \theta
= & \frac{1}{(\log l)^2} \int^1_{\frac{1}{l}} \theta \frac{1}{\theta^2} \dd \theta
+ \frac{1}{(\log l)^2} \int^l_1 \theta \frac{1}{\theta^2} \dd \theta
=
 \frac{1}{(\log l)^2} [ \log \theta ]^l_{\frac{1}{l}}
=
 \frac{2}{\log l}.
\end{align*}
Hence, the last expression converges to $0$ as $l \to \infty$.
Therefore, the estimator \eqref{Jbetaest}
is admissible.
\hfill $\Box$

\section{Admissibility of Bayesian Prediction in the Gamma Model}
In this section,
we show that, in contrast to the estimation problem,
the predictive density based on the Jeffreys prior
is admissible for all $\alpha>0$.
Thus, Bayesian prediction does not exhibit the breakdown of regular estimation structure
that occurs when $0<\alpha\le1$.

Suppose that we observe a random variable $x$
and predict another random variable $y$,
where the distributions of $x$ and $y$
given an unknown parameter $\beta >0$
are
$\mathrm{Ga} (\alpha, \beta)$ and
$\mathrm{Ga} (\alpha T, \beta)$, respectively,
and $x$ and $y$ are independent given $\beta$.
Here,
$\alpha >0$ and $T>0$ are known constants.
Their densities are given by
\begin{align*}
p(x \mid \beta) \frac{\dd x}{x}
=&\ \frac{1}{\Gamma (\alpha)}
\left(\frac{x}{\beta} \right)^{\alpha}
\exp \left(- \frac{x}{\beta} \right)
\frac{\dd x}{x}, \\
p(y \mid \beta) \frac{\dd y}{y}
=&\ \frac{1}{\Gamma (\alpha T)}
\left(\frac{y}{\beta} \right)^{\alpha T}
\exp \left(- \frac{y}{\beta} \right)
\frac{\dd y}{y}.
\end{align*}

Define
\[
u = x + y, \mbox{~~and~~~} v = \frac{y}{x + y}.
\]
Then
\begin{align*}
p_\beta (x , y) & \frac{\dd x}{x} \frac{\dd y}{y}
= \frac{1}{\Gamma (\alpha)} \left(\frac{x}{\beta} \right)^\alpha
\exp \left(- \frac{x}{\beta} \right)
\frac{1}{\Gamma (\alpha T)} \left(\frac {y}{\beta} \right)^{\alpha T}
\exp \left(- \frac{y}{\beta} \right)
\frac{\dd x}{x} \frac{\dd y}{y} \\
&= \frac{\Gamma (\alpha + \alpha T)}{\Gamma (\alpha) \Gamma (\alpha T)}
v^{{\alpha T}-1} (1 - v)^{\alpha - 1} \dd v 
\frac{1}{\Gamma (\alpha + \alpha T)}
\left(\frac{u}{\beta} \right)^{\alpha + {\alpha T}}
\exp \left(- \frac{u}{\beta} \right)
\frac{\dd u}{u},
\end{align*}
and the distribution of $v$ is a beta distribution $\mathrm{Be}(\alpha T, \alpha)$ and
does not depend on $\beta$.
Hence, by sufficiency reduction,
\begin{gather*}
p_\beta (x , y) \frac{\dd x}{x} \frac{\dd y}{y}
= p_\beta (u , v) \frac{\dd u}{u} \dd v
= p_\beta (u) \frac{\dd u}{u} p(v) \dd v,
\end{gather*}
and
\begin{gather}
\log \frac{p_\beta (y \mid x)}{p_\pi (y \mid x)}
= \log \frac{p_\beta (u)}{m_\pi (u)} - \log \frac{p_\beta (x)}{m_\pi (x)}.
\label{sreduction}
\end{gather}

Consider the Gamma process $(x_t)_{t\ge0}$, that is, a process with independent increments
such that the distribution of $x_t$ is $\mathrm{Ga}(\alpha t,\beta)$ for $t\ge0$.

The original prediction problem is equivalent to predicting
\[
 y = x_{1+T} - x_1
\]
which is distributed according to $\mathrm{Ga}(\alpha T,\beta)$,
by using observation $x_1$ distributed according to $\mathrm{Ga}(\alpha,\beta)$.

From \eqref{sreduction}, we have
\begin{align}
 \E_{x , y \mid \beta} \left[ \log \frac{p_\beta (y \mid x)}{p_\pi (y \mid x)} \right]
&= \E_{x_{1+T} \mid \beta} \left[ \log \frac{p_\beta (x_{1+T})}{m_\pi (x_{1+T})} \right]
- \E_{x_1 \mid \beta} \left[ \log \frac{p_\beta (x_1)}{m_\pi (x_1)} \right] \notag \\
&= \int^{1 + T}_{1} \frac{\dd}{\dd t} \E_{x_t \mid \beta , t}
\left[ \log \frac{p_{t, \beta} (x_t)}{m_{t , \pi} (x_t)} \right] \dd t.
\label{integralrepresentation}
\end{align}
Therefore,
the problem of predicting $y$ from an observation of $x$
is equivalent to the problem of predicting
$x_t$ $(1 < t < 1+T)$
from observations of the Gamma process
$x_t$ $(0 \leq t \leq 1)$.

We call
\begin{align}
\frac{\dd}{\dd t} \E_{x_t \mid \beta, t} \left[ \log \frac{p_{t, \beta} (x_t)}{m_{t, \pi} (x_t)} \right]
\label{ipr}
\end{align}
the infinitesimal prediction risk.

Using this integral representation,
we are now ready to establish the main result of the paper.

\begin{theorem}
The Bayesian predictive density
\begin{align*}
p_{\mathrm{J}}(y\mid x)
&=
\frac{\Gamma(\alpha+T\alpha)}
{\Gamma(\alpha)\Gamma(T\alpha)}
\frac{x^\alpha y^{T\alpha}}
{(x+y)^{\alpha+T\alpha}}
\end{align*}
based on the Jeffreys prior
$\pi_\mathrm{J}(\beta)$
is admissible under the Kullback--Leibler loss
for all $\alpha >0$ and $T > 0$.
\end{theorem}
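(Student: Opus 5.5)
We will apply Blyth's method along the prior sequence $g_l(\beta)\beta^{-1}\dd\beta=\tilde g_l(\theta)\theta^{-1}\dd\theta$ of \eqref{estpseq}, with $\tilde g_l=\frac12 h_l^2$. The Bayes risk differences are controlled through the integral representation \eqref{integralrepresentation}. Let $m_{t,l}$ be the marginal density of $x_t\sim\mathrm{Ga}(\alpha t,\beta)$ under $\tilde g_l$. By \eqref{marginalJ}, $m_{t,\mathrm J}\equiv 1$ (with respect to $\dd x/x$) for every $t$. Hence the difference between the Bayes risks (with respect to $\tilde g_l$) of $p_{\mathrm J}$ and of the Bayes predictive density $p_l$ is
\[
\int\tilde g_l(\theta)\frac{\dd\theta}{\theta}\,\E\Bigl[\log\frac{p_l(y\mid x)}{p_{\mathrm J}(y\mid x)}\Bigr]
=\int_1^{1+T}\frac{\dd}{\dd t}\Bigl\{\int \tilde g_l(\theta)\frac{\dd\theta}{\theta}\,\E_{x_t\mid\theta}\bigl[\log m_{t,l}(x_t)\bigr]\Bigr\}\dd t .
\]
Here the prediction loss is the difference of two ``estimation-type'' terms, one at time $1+T$ and one at time $1$. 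These are the Bayes-risk-weighted entropies of marginals, and they are finite for every $\alpha>0$, since only $\log$ moments of the Gamma distribution enter and no $\E[1/x]$ appears. This is why $\alpha\le1$ causes no trouble.

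The key step is to compute the $t$-derivative (the infinitesimal prediction risk \eqref{ipr} integrated against the prior) and to show that it is a nonnegative quantity bounded by $C\int\theta\{h_l'(\theta)\}^2\dd\theta$ uniformly for $t\in[1,1+T]$. Write the Bayes risk at time $t$ as $r_l(t)=\int\!\!\int p_t(x\mid\theta)\log m_{t,l}(x)\,\tilde g_l\frac{\dd x}{x}\frac{\dd\theta}{\theta}=\int m_{t,l}\log m_{t,l}\,\frac{\dd x}{x}$. This has the form of a (negative) entropy of the marginal, normalized relative to the Jeffreys marginal $1$. Differentiate in $t$ under the integral, using $\partial_t p_t=\alpha\{\log(x\theta)-\psi(\alpha t)\}p_t$. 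Then, as in the proof of Theorem~1, integrate by parts in $\theta$ to transfer derivatives onto $\tilde g_l$. Equivalently, one can pass to the \Levy description: the Gamma process has \Levy density $\alpha z^{-1}e^{-\theta z}$, and the infinitesimal risk becomes a Kullback--Leibler loss between the true \Levy density and the posterior mean \Levy density $\alpha z^{-1}\E[e^{-\theta z}\mid x_t]$. In that form, the difference between the Jeffreys and $\tilde g_l$ posterior means is expressible through $\tilde g_l'=h_lh_l'$, exactly as in \eqref{difference}. Bounding $\log$ by $t-1$ and collecting terms should produce a ratio of the form $\{\int(\cdot)\,h_lh_l'\}^2/\int(\cdot)\,h_l^2$, integrated over $x$ and $z$.

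Next we apply Cauchy--Schwarz as in Theorem~1. This bounds the integrated infinitesimal risk difference at time $t$ by a constant $C(\alpha,t)$ times $\int_0^\infty\theta\{h_l'(\theta)\}^2\dd\theta=2/\log l$. The constant must be finite for all $\alpha>0$. In Theorem~1 the factor $\Gamma(\alpha-1)$ arose from $\E[1/x]$ and forced $\alpha>1$. Here the \Levy integral $\int_0^\infty z\cdot z^{-1}e^{-\theta z}(\ldots)\dd z$ and the time-$t$ shape $\alpha t\ge\alpha$ should replace it by something like $\int (1-e^{-\theta z})^2 z^{-1}e^{-\theta z}\dd z$ or $\Gamma(\alpha t+1)$-type quantities, which are finite for every $\alpha>0$. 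Integrating over $t\in[1,1+T]$ gives a bound $C'/\log l\to0$. Finally, we check the Blyth conditions: $\tilde g_l\uparrow \frac12$ on compacts, $\tilde g_l\ge\frac12 h_2^2>0$ on a fixed interval, and the Jeffreys Bayes risks are finite. These conditions then yield admissibility.

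The main obstacle is the second step: obtaining an exact or sign-controlled expression for the $t$-derivative of the Bayes-weighted marginal entropy that, after integration by parts, has the squared $h_lh_l'$ structure. In particular, the resulting constant must not involve $\E[1/x_t]$. If the direct marginal-entropy computation produces a $1/x$ factor, I would instead work at the \Levy-density level. There the loss is $\int\{\lambda\log(\lambda/\hat\lambda)-\lambda+\hat\lambda\}\dd z$ with $\lambda=\alpha z^{-1}e^{-\theta z}$, and the $z$-integration supplies an extra factor that regularizes small $x$.
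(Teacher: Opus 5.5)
\textbf{There is a genuine gap at the central step.} Your outer structure matches the paper's. Blyth's method, the integral representation \eqref{integralrepresentation}, and $m_{t,\mathrm J}\equiv1$ reduce everything to showing that $\partial_t\int m_{t,l}\log m_{t,l}\,\dd z/z\to0$ uniformly for $t\in[1,1+T]$. That estimate is the entire content of the theorem (it is Lemma~\ref{lemma1}), and you only conjecture it. The mechanism you borrow from Theorem~1 also does not carry over.

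\emph{Direct route.} Write $m_{t,l}$ for the marginal built from your $\tilde g_l$. Integration by parts in $\theta$ is possible, but it only gives $\partial_t m_{t,l}(x)=-\alpha\int F(x\theta)\,\tilde g_l'(\theta)\,\dd\theta$, where $F(w)=\int_0^w\{\log s-\psi(\alpha t)\}p_t(s\mid 1)\,\dd s/s$. This is paired with $\log m_{t,l}(x)$, not divided by $m_{t,l}(x)$, so no ratio $\{\int(\cdot)h_lh_l'\}^2/\int(\cdot)h_l^2$ appears. The time derivative of the Gamma family obeys a nonlocal jump equation, not a heat equation as in the normal model, so there is no de Bruijn-type identity to exploit.

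\emph{L\'evy route.} Since $p_t(x\mid\theta)e^{-\theta z}=\{x/(x+z)\}^{\alpha t}p_t(x+z\mid\theta)$, the posterior-mean L\'evy density is $\hat\lambda_l=\hat\lambda_{\mathrm J}\,\rho$. Here $\hat\lambda_{\mathrm J}=\alpha z^{-1}\{x/(x+z)\}^{\alpha t}$ and $\rho=m_{t,l}(x+z)/m_{t,l}(x)$. This is a finite difference of the marginal across a jump, not a derivative as in \eqref{difference}.
\begin{itemize}
\item Your step $\log r\le r-1$ turns the Bayes risk difference into $\int m_{t,l}(x)\int\hat\lambda_{\mathrm J}(\rho-1)^2\,\dd z\,\dd x/x$.
\item This bound is $+\infty$ for every $l$. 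As $x\downarrow0$ one has $m_{t,l}(x)\sim C_l x^{\alpha t}$, so $\rho\to\infty$. For $z$ in a fixed interval, $m_{t,l}(x)\hat\lambda_{\mathrm J}\rho^2=\hat\lambda_{\mathrm J}\,m_{t,l}(x+z)^2/m_{t,l}(x)$ tends to $\alpha z^{-1-\alpha t}m_{t,l}(z)^2/C_l>0$, while $\int_0\dd x/x=\infty$.
\item The exact integrand $1-\rho+\rho\log\rho$ grows only like $\rho\log\rho$. Any argument along your lines must therefore keep the logarithm where $\rho$ is large, which requires a separate edge analysis. You would also need to verify the conditions of Lemma~\ref{lemma2} for the $\tilde g_l$-mixture before using the L\'evy representation.
\end{itemize}
None of this is supplied.

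\textbf{How the paper closes this step.} It changes the prior sequence to log-normal weights $f_l(\beta)=\exp\{-(\log\beta/l)^2\}$. It then rescales $v=\{\log z-\psi(\alpha t)\}/l$.
\begin{itemize}
\item The normalized marginal becomes $q_{t,l}$, the convolution of $\phi(v)=\pi^{-1/2}e^{-v^2}$ with the centred log-gamma law shrunk by $1/l$.
\item This gives $\partial_t\int m_{t,l}\log m_{t,l}\,\dd z/z=\sqrt\pi\,l\int\partial_t q_{t,l}\log q_{t,l}\,\dd v$.
\item Expanding $\phi$ to second order and using $\E\tilde u=0$ and $\E\tilde u^2=\psi'(\alpha t)$ cancels the $O(1/l)$ terms exactly. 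Hence $\int(1+v^2)|\partial_t q_{t,l}|\,\dd v=O(l^{-2})$.
\item Combined with the global bound $|\log q_{t,l}(v)|\le C_1(1+v^2)$, this gives $C/l$ uniformly on compact $t$-intervals.
\end{itemize}
Only moments of $\log x_t$ enter, so there is no restriction on $\alpha$. Because the argument controls $\log q_{t,l}$ globally rather than comparing marginals at different points, the edge problem never arises.
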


\begin{proof}
Consider the increasing sequence of finite unnormalized priors
\[
f_{l}(\beta) \frac{\dd \beta}{\beta}
\coloneqq \exp\biggl\{-\biggl(\frac{\log \beta}{l} \biggr)^2 \biggr\} \frac{\dd \beta}{\beta}
\qquad (l = 1,2,3,\ldots),
\]
which satisfies the assumptions of Blyth's method.
Observe that, up to a normalizing constant,
$f_l(\beta)\,\dd\beta/\beta$
is a log--normal distribution whose variance on the logarithmic scale tends to infinity as $l\to\infty$.
Since the Kullback--Leibler loss is convex as a function of the predictive density,
the standard Blyth argument implies the admissibility of the Jeffreys predictive density;
see, for example, \cite[p.~158]{Schervish95}.

Let $p_l(y \mid x)$ denote the Bayes predictive density
based on
$f_{l}(\beta) \frac{\dd \beta}{\beta}$.
Then,
from \eqref{integralrepresentation},
\begin{align}
& \int \E_{x , y \mid \beta} \left[ \log \frac{p_\beta (y \mid x)}{p_\mathrm{J} (y \mid x)} \right]
f_l(\beta) \frac{\dd \beta}{\beta}
-
\int \E_{x , y \mid \beta} \left[ \log \frac{p_\beta (y \mid x)}{p_l (y \mid x)} \right]
f_l(\beta) \frac{\dd \beta}{\beta} \notag \\
&= \int \int^{1 + T}_{1} \frac{\dd}{\dd t} \E_{x_t \mid \beta , t}
\left[ \log \frac{m_{t,l}(x_t)}{m_{t,\mathrm{J}}(x_t)} \right] \dd t
f_l(\beta) \frac{\dd \beta}{\beta} \notag \\
&= \int \int^{1 + T}_{1} \frac{\dd}{\dd t} \E_{x_t \mid \beta , t}
\Bigl[ \log m_{t,l}(x_t) \Bigr] \dd t
f_l(\beta) \frac{\dd \beta}{\beta} \notag \\
&= \int^{1 + T}_{1}
\partial_{t} \int m_{t,l}(z) \log m_{{t},l}(z)
\frac{\dd z}{z} \dd t,
\label{Briskdiff}
\end{align}
where $x_t$ is denoted as $z$,
\[
m_{t,\mathrm{J}}(z)
\coloneqq
\int p_{t}(z \mid \beta) \frac{\dd \beta}{\beta} = 1
\mbox{~~and~~}
m_{t,l}(z)
\coloneqq
\int p_{t}(z \mid \beta) f_{l}(\beta) \frac{\dd \beta}{\beta}.
\]
To verify the final condition of Blyth's method,
it remains to show that
\eqref{Briskdiff} converges to $0$
as $l \to \infty$.
This follows from the Lemma \ref{lemma1} below.
\end{proof}

\vspace{0.5cm}
\begin{lemma}
\label{lemma1}
For every finite interval $[a,b] \subset \mathbb{R}_+$,
there exists a constant $C>0$
such that
for all $t \in [a,b]$,
\begin{align}
\biggl| \partial _{t} \int m_{t,l}(z) \log m_{{t},l}(z) \frac{\dd z}{z} 
\biggr| 
\leq \frac{C}{l}.
\label{lemma0eq}
\end{align}
\end{lemma}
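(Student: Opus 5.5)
The plan is to pass to logarithmic coordinates, where the problem becomes a convolution of a fixed log-gamma density with a wide Gaussian bump. Put $z=\ee^{s}$ and $\beta=\ee^{b}$, and write $w=s-b$. Then $p_t(z\mid\beta)\,\dd z/z$ becomes the log-gamma density
\[
g_t(w)=\frac{1}{\Gamma(\alpha t)}\exp\{\alpha t\,w-\ee^{w}\}\,\dd w ,
\]
and $f_l(\beta)\,\dd\beta/\beta=\varphi_l(b)\,\dd b$ with $\varphi_l(b)=\exp(-b^2/l^2)$. Hence $M_t(s):=m_{t,l}(\ee^{s})=(g_t*\varphi_l)(s)$, and the quantity in \eqref{lemma0eq} is $\partial_t\int M_t\log M_t\,\dd s$.

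\textbf{Step 1: reduce to an integral against $\partial_t g_t$.} Differentiating under the integral (justified by the exponential/double-exponential tails of $g_t$, uniformly for $t\in[a,b]$) gives $\int \partial_tM_t\,(\log M_t+1)\,\dd s$. Since $\int M_t\,\dd s=l\sqrt{\pi}$ does not depend on $t$, the $+1$ term drops. Writing $\partial_t M_t=(\partial_t g_t)*\varphi_l$ yields
\[
\int \partial_t g_t(w)\int\varphi_l(b)\,\log M_t(w+b)\,\dd b\,\dd w .
\]
Because $\int\partial_t g_t\,\dd w=0$, I may subtract $\log M_t(b)$ inside. I then Taylor expand to second order in $w$. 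The difference splits into a first-order term
\[
c_t\int\varphi_l(b)\,(\log M_t)'(b)\,\dd b,\qquad c_t=\int w\,\partial_tg_t(w)\,\dd w=\partial_t\E[w]=\alpha\psi'(\alpha t),
\]
and a remainder bounded by
\[
\tfrac12\sup|(\log M_t)''|\int w^2|\partial_tg_t(w)|\,\dd w\;\cdot\;\int\varphi_l\,\dd b .
\]
Both $c_t$ and $\int w^2|\partial_t g_t|\,\dd w$ are bounded uniformly on $[a,b]$.

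\textbf{Step 2: derivatives of $\log M_t$ via posterior moments.} Since $\varphi_l'(b)=-(2b/l^2)\varphi_l(b)$, we get $(\log M_t)'(s)=-(2/l^2)\,\E_l[b\mid s]$. Here the expectation is under the "posterior" proportional to $g_t(s-b)\varphi_l(b)$. Differentiating once more gives
\[
(\log M_t)''(s)=-\frac{2}{l^2}+\frac{4}{l^4}\,\mathrm{Var}_l(b\mid s).
\]
The key estimate is that $\mathrm{Var}_l(b\mid s)\le C$ uniformly in $s$, $l\ge1$ and $t\in[a,b]$. The posterior of $w=s-b$ is $g_t$ tilted by the log-concave factor $\varphi_l(s-w)$. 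Since $g_t$ is itself log-concave with curvature bounded below on compacts, the Brascamp--Lieb inequality (or a direct comparison) gives a variance bound by that of $g_t$ up to constants. Thus $\sup|(\log M_t)''|\le C/l^2$. With $\int\varphi_l=l\sqrt\pi$, the remainder is then $O(1/l)$.

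\textbf{Step 3: the first-order term, which I expect to be the main obstacle.} The naive bound $|\E_l[b\mid s]|\le|s|+C$ only yields $O(1)$ after integrating against $\varphi_l$, so a cancellation is needed. I would write
\[
\E_l[b\mid s]=s-\E_l[w\mid s]=(s-\mu_t)-\bigl(\E_l[w\mid s]-\mu_t\bigr),\qquad \mu_t=\E_{g_t}[w].
\]
By the same log-concave tilting argument, the tilt has log-derivative $2(s-w)/l^2$. This should give $|\E_l[w\mid s]-\mu_t|\le C(1+|s|)/l^2$. Then
\[
\int\varphi_l(b)\,\E_l[b\mid b]\,\dd b=\int\varphi_l(b)(b-\mu_t)\,\dd b+O(1)=-\mu_t l\sqrt\pi+O(1),
\]
using the evenness of $\varphi_l$ and $\int\varphi_l(b)(1+|b|)\,\dd b=O(l^2)$. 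Multiplying by $2/l^2$ gives $O(1/l)$, uniformly in $t\in[a,b]$ since $\mu_t=\psi(\alpha t)$ is bounded there. Combining Steps 2 and 3 yields \eqref{lemma0eq}. The delicate points are the uniform posterior moment bounds in the tilt: the left tail of $g_t$ is only exponential, $\ee^{\alpha t w}$, so the constants must be tracked uniformly down to $t=a>0$.
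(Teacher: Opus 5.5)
Your route differs from the paper's and can be made to work, but the two estimates you call key are false as stated, so Steps 2 and 3 do not go through as written. Step 1 and the Taylor split are fine. The failure is in uniformity in $s$, not in $t$.

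The left tail of $g_t$ is exactly exponential, $\ee^{\alpha t w}$, with no curvature. So once $s$ lies a few multiples of $l$ below $-\alpha t l^2/2$, the Gaussian factor takes over. The posterior of $b$ given $s$ is then, up to a negligible truncation near $b=s$, proportional to $\exp(-\alpha t b-b^2/l^2)$, i.e.\ approximately $\mathrm{N}(-\alpha t l^2/2,\,l^2/2)$. Hence $\mathrm{Var}_l(b\mid s)\approx l^2/2$ rather than $O(1)$. Likewise $\E_l[w\mid s]\approx s+\alpha t l^2/2$, which already violates $|\E_l[w\mid s]-\mu_t|\le C(1+|s|)/l^2$ at $s=-\alpha t l^2$.

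Brascamp--Lieb applied to the posterior potential $V(w)=-\alpha t w+\ee^{w}+(s-w)^2/l^2$ gives only $\mathrm{Var}\le\E[1/V'']\le l^2/2$, because $V''=\ee^{w}+2/l^2$ is not bounded below by a constant. Curvature bounded below on compacts gives no global variance bound. You flagged the exponential left tail, but only as a matter of tracking constants down to $t=a$; in fact it breaks both claims for every fixed $t$.

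Both steps can be repaired.

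\textbf{Step 2.} Here you need $\sup_s|(\log M_t)''|$ over all $s$, since the Taylor remainder is evaluated at $b+\theta w$ with $w$ ranging over $\mathbb{R}$. This bound is immediate without any $O(1)$ variance bound. By Pr\'ekopa's theorem, $M_t=g_t*\varphi_l$ is log-concave, so $(\log M_t)''\le0$. Your identity with $\mathrm{Var}\ge0$ gives $(\log M_t)''\ge-2/l^2$. Equivalently, the Brascamp--Lieb bound $\mathrm{Var}\le l^2/2$ is exactly enough.

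\textbf{Step 3.} Since $\int\varphi_l(b)\,b\,\dd b=0$, the first-order term equals $(2c_t/l^2)\int\varphi_l(b)\,\E_l[w\mid s=b]\,\dd b$. So it suffices to have $|\E_l[w\mid s]|\le C$ on $|s|\le cl^2$, for a fixed $0<c<\alpha a/2$, and at most linear growth elsewhere.
\begin{enumerate}
\item The bulk bound holds because, for $|s|\le cl^2$, the tilted densities $\propto g_t(w)\exp\{-\lambda(s-w)^2/l^2\}$, $\lambda\in[0,1]$, have log-derivative at least $\alpha t-\ee^{w}-2c$ for $w\le0$. Their left tails therefore decay at a uniform positive rate, the right tail is doubly exponential, and all moments are uniformly bounded.
\item The linear growth follows from $|(\log M_t)''|\le2/l^2$, which gives $|\E_l[b\mid s]|\le|\E_l[b\mid 0]|+|s|$. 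On $|s|>cl^2$ the weight satisfies $\varphi_l\le\ee^{-c^2l^2}$, so that region is negligible.
\end{enumerate}

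For comparison, the paper avoids posterior-moment estimates altogether. After rescaling $v=(\log z-\psi(\alpha t))/l$, every derivative falls on the fixed Gaussian $\phi$. The two $O(1/l)$ contributions to $\partial_t q_{t,l}$, one from the moving centre $\psi(\alpha t)$ and one from $\partial_t r_t$, cancel exactly via $\E[\tilde u]=0$ and $\E[\tilde u^2]=\psi'(\alpha t)$. Only the crude bound $|\log q_{t,l}(v)|\le C_1(1+v^2)$ is then needed.
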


\vspace{0.5cm}

\begin{proof}
Let
\begin{align*}
\overline{f}_{l}(\beta) \frac{\dd \beta}{\beta}
&\coloneqq \frac{1}{\sqrt{\pi}\, l} f_l (\beta) \frac{\dd \beta}{\beta}
~~~ \mathrm{and} ~~~
\overline{m}_{t, l}(z) \frac{\dd z}{z}
\coloneqq \frac{1}{\sqrt{\pi}\,l} m_{t,l} (z)
\frac{\dd z}{z}.
\end{align*}
Then,
$\overline{f}_{l}(\beta) \frac{\dd \beta}{\beta}$
and
$\overline{m}_{t, l}(z) \frac{\dd z}{z}$
are probability measures.

Let
\[
\xi \coloneqq \frac{\log \beta}{l}.
\]
When $z$ and $\beta$ are jointly distributed according to
\[
p_t(z \mid \beta) \overline{f}_{l}(\beta) 
\frac{\dd z}{z}
\frac{\dd \beta}{\beta},
\]
the probability density of $\xi$ is
\begin{align}
\phi(\xi) \coloneqq \frac{1}{\sqrt{\pi}} \exp(-\xi^2). 
\label{phi}
\end{align}

Let
\[
u = \log \frac{z}{\beta}.
\]
Then the probability density of $u$ is
\begin{align*}
 r_t(u) \dd u \coloneqq \frac{1}{\Gamma(\alpha t)} \ee^{\alpha t u} \exp(-\ee^{u}) \dd u,
\end{align*}
which is the log--Gamma distribution with shape parameter $\alpha t$,
and
\begin{align*}
 \E[u] = \psi(\alpha t) \coloneqq
 \frac{\Gamma' (\alpha t)}{\Gamma (\alpha t)}.
\end{align*}

Now define
\begin{align*}
 v \coloneqq \frac{\log z-\psi(\alpha t)}{l}
=\frac{u + \log \beta - \psi(\alpha t)}{l}
= \frac{u-\psi(\alpha t)}{l}+\xi.
\end{align*}
Then, the marginal density of $v$ is
\begin{align}
\label{qdensity}
q_{t, l}(v)
=\int \phi \Bigl(v - \frac{u-\psi(\alpha t)}{l} \Bigr) r_t(u) \dd u.
\end{align}

For fixed $l$,
$z$ and $v$ are in one-to-one correspondence,
and so are $\beta$ and $\xi$.
Since
\[
l \dd v=\frac{\dd z}{z},
\]
we have
\[
\overline{m}_{t,l}(z) \frac{\dd z}{z}
= q_{t,l}(v) \dd v
= q_{t,l}(v) \frac{1}{l} \frac{\dd z}{z}.
\]

Therefore,
\begin{align}
\partial_{t} & \int m_{t,l}(z) \log m_{t, l}(z) \frac{\dd z}{z}
= \sqrt{\pi} l \partial_{t} \int \overline{m}_{t, l}(z) \log \overline{m}_{t,l}(z) \frac{\dd z}{z} \notag \\
&= \sqrt{\pi} l \, \partial_{t} \Bigl\{\int q_{t, l}(v) \log q_{t, l}(v) \dd v-\log l \Bigr\}
=\sqrt{\pi}l  \, \partial_{t} \int q_{t, l}(v) \log q_{t, l}(v) \dd v  \notag \\
&=\sqrt{\pi} l  \, \int \partial_{t} q_{t, l}(v) \log q_{t, l}(v) \dd v.
\label{entropy}
\end{align}
Hence,
to prove \eqref{lemma0eq},
it suffices to show
\begin{align}
\sqrt{\pi} l \, \biggl| \int \partial_{t} q_{t, l}(v) \log q_{t, l}(v) \dd v \biggr| \leq \frac{C}{l}.
\label{entropy2}
\end{align}

We first show that there exists a constant $C_1>0$,
independent of $l$,
such that
\begin{align}
\bigl| \log q_{t, l}(v) \bigr| \leq C_1 (1+v^2).
\label{logup}
\end{align}

Let
$\tilde{u} \coloneqq u-\psi(\alpha t)$
and
$\tilde{r}_t (\tilde{u}) \coloneqq r_t(\tilde{u} + \psi(\alpha t))$.
If $R$ is sufficiently large,
then
\[
\int^R_{-R} \tilde{r}_t(\tilde{u}) \dd \tilde{u} \geq \frac{1}{2}.
\]
Hence, from \eqref{qdensity},
\begin{align*}
q_{t, l}(v)
&\geq
\int ^R_{-R} \phi \Bigl(v-\frac{\tilde{u}}{l} \Bigr) \tilde{r}_t(\tilde{u}) \dd \tilde{u}
\geq \frac{1}{2} \inf_{|\tilde{u}| \leq R} \phi \Bigl(v-\frac{\tilde{u}}{l} \Bigr)
\geq \frac{1}{2} \inf_{|\tilde{u}| \leq R} \phi (v-\tilde{u}).
\end{align*}
Since \eqref{phi} and
\[
(v-\tilde{u})^2 \leq 2(v^2 + {\tilde{u}}^2) \leq 2(v^2 + R^2)
\]
for $|\tilde{u}| \leq R$,
there exists a constant $C_2>0$,
independent of $l$,
such that
\begin{align*}
q_{t, l}(v)
\geq 
\frac{1}{2} \inf_{|\tilde{u}| \leq R} \phi (v-\tilde{u})
\geq C_2 \ee^{-2 v^2}.
\end{align*}
From
\[
\sup_v \phi(v) = \frac{1}{\sqrt{\pi}},
\]
and \eqref{qdensity},
we obtain
\begin{align*}
\log C_2 - 2 v^2
\leq
\log q_{t, l}(v)
\leq
-\frac{1}{2} \log \pi.
\end{align*}
Thus \eqref{logup} follows.

Therefore,
to prove \eqref{entropy2},
it suffices to show
\[
\int (1+v^2) \left| \partial_{t} q_{t, l}(v) \right| \dd v
\leq \frac{C}{l^2}.
\]

From \eqref{qdensity} and
\[
\partial_{t} r_{t} (u)= \alpha (u-\psi(\alpha t)) r_{t}(u),
\]
we obtain
\begin{align}
\partial_{t} q_{t, l}(v)
&=  \alpha \int \frac{\psi'(\alpha t)}{l} \phi'
 \Bigl(v-\frac{u}{l}+\frac{\psi(\alpha t)}{l} \Bigr)
 r_{t}(u)\dd u 
+
 \alpha \int \phi \Bigl(v-\frac{u}{l}+\frac{\psi(\alpha t)}{l} \Bigr) (u-\psi (\alpha t)) r_{t}(u)\dd u \notag \\
&=  \alpha \frac{\psi'(\alpha t)}{l} \int \phi'\Bigl(v-\frac{\tilde{u}}{l} \Bigr) \tilde{r}_{t}(\tilde{u}) \dd \tilde{u}
+ \alpha \int \phi \Bigl(v-\frac{\tilde{u}}{l} \Bigr) \tilde{u}\tilde{r}_{t}(\tilde{u}) \dd \tilde{u}.
\label{partialq}
\end{align}

For a twice continuously differentiable function $g(v)$,
\begin{align}
 g(v +h)
 = g(v) + h \int_0^1 g'(v+h s) \dd s 
 = g(v) + h g'(v)
 + h^2 \int^1_0 (1-s) g''(v+h s)\dd s.
\label{C^2}
\end{align}
From \eqref{partialq}, \eqref{C^2}, $\E [\tilde{u}] = \int \tilde{u} \tilde{r}_{t}(\tilde{u})\dd \tilde{u} = 0$ and
$\E [\tilde{u}^2] = \int \tilde{u}^2 \tilde{r}_{t}(\tilde{u})\dd \tilde{u} = \psi'(\alpha t)$,
we have
\begin{align}
\frac{1}{\alpha} \partial_{t} q_{t, l}(v)
&=
\frac{\psi'(\alpha t)}{l} \int \biggl\{ \phi'(v) -
\frac{\tilde{u}}{l} \int^1_0 \phi'' \Bigl(v-\frac{\tilde{u}}{l}s \Bigr) \dd s 
\biggr\} \tilde{r}_{t}(\tilde{u})\dd \tilde{u} \notag \\
&~~+\int \Bigl\{\phi(v)-\frac{\tilde{u}}{l} \phi'(v)
+ \frac{\tilde{u}^2}{l^2} \int^1_0 (1-s) \phi'' \Bigl(v-\frac{\tilde{u}}{l}s \Bigr) \dd s
\Bigr\} \tilde{u} \tilde{r}_{t}(\tilde{u}) \dd \tilde{u} \notag \\
&=\frac{\psi'(\alpha t)}{l}\phi'(v)-\frac{\psi'(\alpha t)}{l^2}
\int \int^1_0 \tilde{u} \phi''
\Bigl(v-\frac{\tilde{u}}{l} s \Bigr) \dd s \tilde{r}_{t}(\tilde{u}) \dd \tilde{u} \notag \\
&~~~ - \frac{\psi'(\alpha t)}{l} \phi'(v)
+\int \frac{\tilde{u}^3}{l^2} \int^1_0(1-s) \phi''
\Bigl(v - \frac{\tilde{u}}{l}s \Bigr) \dd s
\tilde{r}_{t}(\tilde{u}) \dd \tilde{u} \notag \\
&=\frac{1}{l^2} \int
\int^1_0 \bigl\{(1-s)\tilde{u}^3-\psi'(\alpha t)\tilde{u} \bigr\}
\phi'' \Bigl(v-\frac{\tilde{u}}{l} s \Bigr) \dd s
\tilde{r}_{t}(\tilde{u}) \dd \tilde{u}.
\label{dq}
\end{align}

Since
\[
\phi''(v)=(4v^2-2)\phi(v),
\]
there exists a constant $C_3 > 0$ such that
\begin{align}
\int(1+ v^2) \bigl| \phi''(v-w) \bigr| \dd v
&= \int \bigl\{1+ (v+w)^2 \bigr\} \bigl| \phi''(v) \bigr| \dd v
\leq \int(1+2v^2+2 w^2) \bigl| \phi''(v) \bigr| \dd v \notag \\
&= \int \bigl| (1+2 v^2+2 w^2)(4 v^2-2) \bigr| \phi (v) \dd v
\leq C_3 (w^2+1).
\label{1+v^2}
\end{align}

From \eqref{logup}, \eqref{dq}, and \eqref{1+v^2}, we have
\begin{align}
\frac{1}{\alpha} \int \bigl| \partial_{t} q_{t, l}(v) \bigr| \, \bigl| \log q_{t, l}(v) \bigr| \dd v
&\leq 
\frac{C_1}{l^2} \int(1+v^2) \biggl| \int
\int^1_0 \bigl\{(1-s)\tilde{u}^3-\psi'(\alpha t)\tilde{u} \bigr\}
\phi'' \Bigl(v-\frac{\tilde{u}}{l} s \Bigr) \dd s
\tilde{r}_{t}(\tilde{u}) \dd \tilde{u}\biggr| \dd v \notag \\
&\leq \frac{C_1}{l^2}
\int \Bigl(|\tilde{u}|^3 + \psi'(\at) |\tilde{u}| \Bigr)
\int^1_0 \int(1+v^2)
\biggl| \phi'' \Bigl(v-\frac{\tilde{u}}{l} s \Bigr) \biggr| \dd v \dd s \tilde{r}_{t}(\tilde{u}) \dd \tilde{u} \notag \\
&\leq \frac{C_1 C_3}{l^2} \int
\Bigl(|\tilde{u}|^3 + \psi'(\alpha t) |\tilde{u}| \Bigr)
\Bigl(1+ \frac{\tilde{u}^2}{l^2} \Bigr)
\tilde{r}_{t}(\tilde{u})
\dd \tilde{u}
\leq \frac{C_4}{l^2}.
\label{entineq}
\end{align}
Since the third and fifth absolute moments of $\tilde r_t$
are finite and continuous in $t$,
the constant $C_4$ can be chosen uniformly over
$t\in[a,b]$.
Therefore,
\eqref{entropy2} follows from \eqref{entineq}.
\end{proof}

\section{Infinitesimal prediction and estimation of \Levy measures}

The Kullback--Leibler risk of Bayesian predictive densities
can often be represented as an integral of the Kullback--Leibler risk
of the corresponding Bayes estimators for unknown parameters.
See \cite{K:JMVA2006} for the Poisson models
and \cite{BGX:AS08} for the normal models.
For a class of Poisson regression models, the Kullback-Leibler risk for Bayesian predictive densities
is represented as an integral of the Kullback-Leibler risk
for the corresponding Bayes extended estimators \citep{K:JJSD2024}.

Unlike the normal and Poisson models, infinitesimal prediction in the Gamma model
does not reduce to parameter estimation.
Instead, it reduces to the estimation of the \Levy measure.
Related problems in information theory have been studied
in connection with channel capacities of normal, Poisson,
and \Levy channels \citep{GuoShamaiVerdu13, JVW17}.

In this section,
we discuss the relation between infinitesimal prediction
and the estimation of \Levy measures.
We consider \Levy measures of the class of pure jump subordinators including the Gamma process,
and the discussion becomes simpler than those for general \Levy processes.

We first summarize several basic facts on \Levy processes and pure jump subordinators.
Detailed expositions can be found in
\cite{Applebaum09}, \cite{ContTankov04}, and \cite{{Sato13}}.

A \Levy\ process $(y_t)_{t\ge0}$ is a cadlag stochastic process
with $y_0=0$ almost surely,
independent and stationary increments,
and continuity in probability.
Every \Levy process is infinitely divisible, and conversely,
every infinitely divisible distribution
is realized as the distribution of $x_1$
for some \Levy process.

A subordinator
is a one-dimensional nondecreasing \Levy process.
The characteristic function of a subordinator $\{y_t\}$ is represented as
\[
\E[\exp(\ii u y_t)]
=
\exp\left\{
\ii bu
+
\int_0^\infty
(e^{\ii uy}-1)\,
\tilde p(\dd y)
\right\},
\]
where $b\ge0$ and
$\tilde p$
is a \Levy measure satisfying
\[
\int_0^\infty (y\wedge1)\,\tilde p(\dd y)<\infty.
\]
In the following,
we assume $b=0$ and there exists \Levy density i.e. $\tilde{p}(\dd y) = \tilde{p}(y) \dd y$.
Such subordinators are called pure jump subordinators
with \Levy densities,
which is a natural class that includes the Gamma process.

The Kullback--Leibler divergence between two \Levy densities of pure jump subordinators
is defined as follows.
This definition is based on the relative entropy between \Levy processes;
see, e.g., \cite[p.~312]{ContTankov04}.

\begin{definition}
We define the Kullback--Leibler divergence between the two \Levy densities $\tilde{p}(y)$ and $\tilde{q}(y)$
corresponding to pure jump subordinators by
\[
 D(\tilde{p},\tilde{q}) =
\int \tilde{q}(y) - \tilde{p}(y) - \tilde{p}(y) \log \frac{\tilde{q}(y)}{\tilde{p}(y)} \dd y.
\]
\end{definition}

\vspace{0.5cm}

For the estimation of \Levy measures,
the following result holds,
which corresponds to the well-known result of
\cite{Aitchison75}
for the estimation of probability densities.

\vspace{0.5cm}

\begin{theorem}
\label{LevyAitchson}
Let $\tilde p_\xi(y)\,\dd y$ $(y\ge0)$
be the \Levy measure of a pure jump subordinator
indexed by a parameter $\xi$,
and let $x$ be an independent observation whose distribution is specified by $\xi$.
Consider the problem of estimating the \Levy density
$\tilde p_\xi(y)$
based on the observation $x$.

Suppose that an estimator of the \Levy density,
denoted by $\tilde q(y;x)$,
is evaluated by the Bayes risk
\begin{align*}
&\iint
\left\{
\tilde q(y;x)
-
\tilde p_\xi(y)
-
\tilde p_\xi(y)
\log
\frac{\tilde q(y;x)}
{\tilde p_\xi(y)}
\right\}
\,dy\,
p(x\mid\xi)\,\dd x\,
\pi(\xi)\,\dd\xi ,
\end{align*}
with respect to a proper prior
$\pi(\xi)\,\dd\xi$,
and assume that the Bayes risk is finite.

Then the Bayes estimator minimizing the Bayes risk
is given by
\[
\tilde q(y;x)
=
\tilde p_\pi(y\mid x)
:=
\int
\tilde p_\xi(y)\,
\pi(\xi \mid x)\,
\dd\xi,
\]
namely, the posterior mean of the \Levy density.
\end{theorem}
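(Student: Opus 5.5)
The plan mirrors the proof of Aitchison's result: rewrite the Bayes risk as an expectation over $x$ of a posterior expected loss, then minimize that pointwise in $x$ and $y$. Since the integrand $\tilde q-\tilde p_\xi-\tilde p_\xi\log(\tilde q/\tilde p_\xi)$ is nonnegative (it has the form $a-b-b\log(a/b)\ge0$ for $a,b>0$), Tonelli's theorem lets us reorder the integrals freely. Writing $m(x)=\int p(x\mid\xi)\pi(\xi)\,\dd\xi$ and $\pi(\xi\mid x)=p(x\mid\xi)\pi(\xi)/m(x)$, the Bayes risk becomes
\[
\int m(x)\int \Bigl[\int\Bigl\{\tilde q(y;x)-\tilde p_\xi(y)-\tilde p_\xi(y)\log\frac{\tilde q(y;x)}{\tilde p_\xi(y)}\Bigr\}\pi(\xi\mid x)\,\dd\xi\Bigr]\dd y\,\dd x .
\]
It therefore suffices to minimize, for almost every fixed $(x,y)$, the inner posterior expected loss as a function of the single number $a=\tilde q(y;x)>0$.

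For fixed $(x,y)$, set $\bar p=\tilde p_\pi(y\mid x)=\int\tilde p_\xi(y)\pi(\xi\mid x)\,\dd\xi$. The inner integral equals
\[
a-\bar p-\bar p\log a+\int \tilde p_\xi(y)\log\tilde p_\xi(y)\,\pi(\xi\mid x)\,\dd\xi .
\]
The key step is to avoid subtracting possibly infinite quantities. Instead, compare the loss of a general $a$ directly with that of $\bar p$: the difference of the posterior expected losses is $a-\bar p-\bar p\log(a/\bar p)$. This follows by adding and subtracting $\tilde p_\xi\log\bar p$ inside the integrand, which gives
\[
\int\Bigl\{a-\tilde p_\xi-\tilde p_\xi\log\tfrac{a}{\tilde p_\xi}\Bigr\}\pi(\xi\mid x)\,\dd\xi=\Bigl\{a-\bar p-\bar p\log\tfrac{a}{\bar p}\Bigr\}+\int\Bigl\{\bar p-\tilde p_\xi-\tilde p_\xi\log\tfrac{\bar p}{\tilde p_\xi}\Bigr\}\pi(\xi\mid x)\,\dd\xi .
\]
Both terms on the right are nonnegative, so the identity is legitimate in $[0,\infty]$. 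The first term vanishes iff $a=\bar p$, by the elementary inequality $\log s\le s-1$ with equality iff $s=1$. Integrating this identity over $y$ and against $m(x)\,\dd x$ yields
\[
r(\tilde q)=r(\tilde p_\pi)+\iint \Bigl\{\tilde q-\tilde p_\pi-\tilde p_\pi\log\frac{\tilde q}{\tilde p_\pi}\Bigr\}\dd y\, m(x)\,\dd x\ \ge r(\tilde p_\pi),
\]
with equality only if $\tilde q=\tilde p_\pi$ almost everywhere.

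The main obstacle is the integrability bookkeeping. We must ensure that $\bar p<\infty$ for almost every $(x,y)$, and handle the points where $\bar p=0$ or $\tilde p_\xi=0$, interpreting $0\log 0=0$ and the loss as $a$ when $\tilde p_\xi=0$. For finiteness, the assumption that the Bayes risk is finite for some estimator $\tilde q_0$ gives $\int \tilde q_0\,m\,\dd y\,\dd x<\infty$ up to the nonnegative remainder. More robustly, $r(\tilde q_0)<\infty$ implies that the posterior expected loss at $a=\tilde q_0(y;x)$ is finite for almost every $(x,y)$. That in turn forces $\bar p<\infty$, because $a-b-b\log(a/b)\ge b(\log b-\log a-1)$ grows superlinearly in $b$. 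Given this, $\bar p$ is a well-defined finite candidate, and the decomposition above shows $r(\tilde p_\pi)\le r(\tilde q_0)<\infty$. This establishes that the posterior mean \Levy density is the Bayes estimator.
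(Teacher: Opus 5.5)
Your proof is correct and has the paper's skeleton: Tonelli to pass to the posterior, then pointwise minimization in $(x,y)$ as in Aitchison's argument. The decomposition is different, though, in a way that matters.

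\textbf{The paper's decomposition.} The paper splits off $-\iint\tilde p_\pi\log\tilde p_\pi\,\dd y\,p_\pi(x)\,\dd x$ and $\iint\tilde p_\xi\log\tilde p_\xi\,\dd y\,\pi(\xi)\,\dd\xi$ as terms not depending on $\tilde q$. That is quick, and when these entropy-type integrals are finite it also identifies the minimal Bayes risk as their difference. In the Gamma setting that motivates the theorem, however, both are infinite. Every $\tilde p_\xi(y)=\alpha e^{-y/\beta}/y$, and every posterior mixture of such densities, behaves like $\alpha/y$ at the origin, so $\int_0^1\tilde p\log\tilde p\,\dd y=+\infty$. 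The paper's split is therefore of the form $\infty-\infty$ there.

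\textbf{Your decomposition.} Your bias--variance (Bregman-type) identity
\[
r(\tilde q)=r(\tilde p_\pi)+\iint\Bigl\{\tilde q-\tilde p_\pi-\tilde p_\pi\log\frac{\tilde q}{\tilde p_\pi}\Bigr\}\,\dd y\,m(x)\,\dd x
\]
involves only nonnegative quantities, so it holds in $[0,\infty]$ with no finiteness of entropy terms needed. It also gives $r(\tilde p_\pi)\le r(\tilde q_0)<\infty$ and a.e.\ uniqueness as by-products. Your superlinear-growth argument for $\bar p<\infty$ is exactly the extra ingredient this route requires.

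\textbf{Two small fixes.}
\begin{enumerate}
\item The $\xi$-level identity is not justified by both right-hand terms being nonnegative, because the piece $a-\bar p-\tilde p_\xi\log(a/\bar p)$ is not pointwise nonnegative. It holds because that piece is posterior-integrable when $a>0$ and $0<\bar p<\infty$, with integral $a-\bar p-\bar p\log(a/\bar p)$, while the other piece is nonnegative. The cases $a=0$ or $\bar p=0$ are trivial.
\item Drop the sentence claiming that finite Bayes risk gives $\int\tilde q_0\,m\,\dd y\,\dd x<\infty$. It is false for infinite-activity L\'evy densities such as the Gamma one, and your ``more robust'' argument already replaces it.
\end{enumerate}
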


\vspace{0.5cm}

We call $\tilde{p}_\pi (y \mid x)$ the Bayesian predictive \Levy density.

\vspace{0.5cm}

\begin{proof}[Proof of Theorem \ref{LevyAitchson}]
We have
\begin{align*}
&\iint
\left\{
\tilde q(y;x)
-
\tilde p_\xi(y)
-
\tilde p_\xi(y)
\log
\frac{\tilde q(y;x)}
{\tilde p_\xi(y)}
\right\}
dy\,
p(x\mid\xi)\,\dd x\,
\pi(\xi)\,\dd\xi
\\
&=
\int
p_\pi(x)
\Biggl[
\iint
\Bigl\{
\tilde q(y;x)
-
\tilde p_\xi(y)
-
\tilde p_\xi(y)\log \tilde q(y;x)
+
\tilde p_\xi(y)\log \tilde p_\xi(y)
\Bigr\}
dy\,
\pi(\xi\mid x)\,\dd\xi
\Biggr]
\dd x
\\
&=
\int
p_\pi(x)
\int
\left\{
\tilde q(y;x)
-
\tilde p_\pi(y\mid x)
-
\tilde p_\pi(y\mid x)
\log
\frac{\tilde q(y;x)}
{\tilde p_\pi(y\mid x)}
\right\}
dy\,
\dd x
\\
&\qquad
-
\int
\tilde p_\pi(y\mid x)
\log \tilde p_\pi(y\mid x)
\,dy\,
p_\pi(x)\,\dd x
+
\iint
\tilde p_\xi(y)
\log \tilde p_\xi(y)
\,dy\,
\pi(\xi)\,\dd\xi .
\end{align*}
The second and third terms do not depend on
$\tilde q(y;x)$.
The first term is minimized when
\[
\tilde q(y;x)
=
\tilde p_\pi(y\mid x).
\]
Therefore, the Bayes risk is minimized by
$\tilde q(y;x)=\tilde p_\pi(y\mid x)$.
\end{proof}

We consider pure jump subordinator models
based on the Esscher transform 
(exponential tilting) including the Gamma model.
For a \Levy density
$\tilde p(y)\,\dd y$
on $\mathbb R_+$,
if
\[
\int_1^\infty
e^{\theta y}\tilde p(y)\,\dd y < \infty,
\]
then
\[
\tilde p_\theta(y)\,\dd y
:=
e^{\theta y}\tilde p(y)\,\dd y
\]
is again a \Levy measure.
If $(x_t)_{t\ge0}$ is a pure jump process
generated by a \Levy measure with unknown parameter $\theta$ for Esscher transfom,
then the terminal value $x_t$ is a sufficient statistic
for the observation
$\{x_\tau: 0\le \tau \le t \}$;
see, for example,
Küchler and Sørensen (1997).

Consider the problem of predicting
\[
 y_s \coloneqq x_{t+s} - x_t \sim p_s(y \mid \zeta),
\]
where $\{x_t\}$ is a pure jump subordinator, 
by using observation \(x_t \sim p_t(x \mid \zeta) \).

Under suitable regularity conditions,
the relation
\[
 \tilde{p}(y)\,\dd y
 =
 \lim_{s\downarrow 0}
 \frac{1}{s}p_s(y)
\]
holds, where $p_s(y)$ denotes the probability density of
$y_s=x_{t+s}-x_t$.
The condition is fulfilled in many commonly used models,
including the Gamma process;
see \cite{BarndorffNielsen00,RuschendorfWoerner02}.

\vspace{0.5cm}

\begin{example} Gamma processes

The probability density of $y_s$ is
\[
 p_{s,\theta}(y) = \frac{1}{\Gamma(\alpha s)} \frac{(\theta y)^{\alpha s}}{y} \exp(-\theta y),
\]
where $\theta = 1/\beta$ is the unknown parameter.
The L\'{e}vy measure is
\begin{align*}
\tilde{p}(y) \dd y &= \lim_{s \downarrow 0} \frac{1}{s} \frac{1}{\Gamma(\alpha s)} \frac{(\theta y)^{\alpha s}}{y} \exp(-\theta y)
= \frac{\alpha}{y} \exp(-\theta y).
\end{align*}
Thus, the parameter of the Gamma \Levy densities corresponds to the parameter of the Esscher transform.
When $x$ is distributed according to $\mathrm{Ga}(\alpha,\beta)$, 
the posterior density $\pi_\mathrm{J} (\beta \mid x)$ is \eqref{Jpost}.
Then, the Bayesian predictive \Levy density is
\begin{align}
\int \frac{\alpha}{y} \exp \biggl( - \frac{y}{\beta} \biggr)
\frac{1}{\Gamma(\alpha)}
\biggl( \frac{x}{\beta} \biggr)^\alpha
\exp \biggl( - \frac{x}{\beta} \biggr) \frac{\dd \beta}{\beta} \dd y
&=
\alpha
\biggl( \frac{x}{x+y} \biggr)^\alpha \frac{\dd y}{y}.
\label{BPL}
\end{align}
The Bayesian predictive density $p_{s,\mathrm{J}} (y \mid x)$ for $y_s$ is \eqref{Jpredictive}.
Then, \eqref{BPL} coincides with
\[
\lim_{s \downarrow 0} \frac{1}{s} p_{s,\mathrm{J}} (y \mid x) \frac{\dd y}{y}
= \lim_{s \downarrow 0} \frac{\alpha}{\alpha s}
\frac{\Gamma (\alpha + \alpha s)}{\Gamma (\alpha s) \Gamma (\alpha)}
\frac{x^{\alpha} y^{\alpha s}}{(x + y)^{\alpha + \alpha s}}  \frac{\dd y}{y}
= 
\alpha
\biggl( \frac{x}{x+y} \biggr)^\alpha
\frac{\dd y}{y}.
\]
%

Although $p_{s,\theta}(y)$ is the distribution of a \Levy process,
$p_{s,\pi}(y)$ is not, in general, the distribution of a \Levy process.
Nevertheless, one can consider the corresponding Bayesian predictive
\Levy density.
\qed 
\end{example}

\vspace{0.5cm}

The following lemma relates the Kullback--Leibler divergence between
mixtures of probability densities of pure jump subordinators to the
Kullback--Leibler divergence between the corresponding mixtures of \Levy densities.
No assumption is made that the parameter is introduced through an Esscher transform.

\vspace{0.5cm}

\begin{lemma}
\label{lemma2}
Let
\[
\mathcal P_s=\{p_{s,\xi}(y)\}
\]
be a parametric family of probability densities of the increment
$y_s=x_{t+s}-x_t$
of a pure jump subordinator.
Let $p_s(y)$ and $q_s(y)$ be mixture densities with respect to probability measures
$\pi_p$ and $\pi_q$ on the parameter space of $\xi$, respectively.

Assume that Conditions 1--3 below hold.
\begin{enumerate}
\item
The limits
\[
\tilde p(y)
=
\lim_{s\downarrow0}
\frac{1}{s}p_s(y),
\qquad
\tilde q(y)
=
\lim_{s\downarrow0}
\frac{1}{s}q_s(y)
\]
exist and define \Levy densities
$\tilde p(y)$ and $\tilde q(y)$.

\item Define
\[
f_{q,p}(s)
:=
\frac{q_s}{p_s}(0)
\qquad (s>0),
\]
and
\[
f_{q,p}(0)
:=
\lim_{s\downarrow0}
f_{q,p}(s).
\]
Then $f_{q,p}(0)=1$ and
$f_{q,p}(s)$ is one-sided differentiable at $s=0$.

\item
\begin{align}
\label{cond1}
\lim_{s \downarrow0}
\frac{1}{s}
\int
\{q_s(y)-f_{q,p}(s)p_s(y)\}
\,dy
&=
\int
\{\tilde q(y)-\tilde p(y)\}
\,dy,
\\
\label{cond2}
\lim_{s \downarrow0}
\frac{1}{s}
\int
p_s(y)
\log
\frac{q_s(y)}
     {f_{q,p}(s)p_s(y)}
\,dy
&=
\int
\tilde p(y)
\log
\frac{\tilde q(y)}
     {\tilde p(y)}
\,dy.
\end{align}
\end{enumerate}
Under these conditions,
\begin{align*}
\lim_{s \downarrow0}
\frac{1}{s}
\int
\left\{
q_s(y)-p_s(y)
-p_s(y)
\log
\frac{q_s(y)}
     {p_s(y)}
\right\}
dy
&=
\int
\left\{
\tilde q(y)-\tilde p(y)
-\tilde p(y)
\log
\frac{\tilde q(y)}
     {\tilde p(y)}
\right\}
dy
\\
&=
-f'_{q,p}(0)
-
\int
\tilde p(y)
\log
\frac{\tilde q(y)}
     {\tilde p(y)}
\,dy.
\end{align*}
\end{lemma}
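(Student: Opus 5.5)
The plan is to split the integrand algebraically by inserting the factor $f_{q,p}(s)$, so that each piece is controlled directly by Condition~2 or Condition~3. Since $p_s$ and $q_s$ are probability densities, $\int q_s\,dy=\int p_s\,dy=1$. For each $s>0$, I would write $\log(q_s/p_s)=\log\{q_s/(f_{q,p}(s)p_s)\}+\log f_{q,p}(s)$. This gives
\begin{align*}
\int\Bigl\{q_s-p_s-p_s\log\frac{q_s}{p_s}\Bigr\}dy
=\int\{q_s-f_{q,p}(s)p_s\}\,dy+\{f_{q,p}(s)-1\}
-\log f_{q,p}(s)-\int p_s\log\frac{q_s}{f_{q,p}(s)p_s}\,dy ,
\end{align*}
where I have used $\int p_s\,dy=1$ twice.

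Next I would divide by $s$ and let $s\downarrow0$, taking the four terms one at a time. The first and last terms converge by \eqref{cond1} and \eqref{cond2} to $\int(\tilde q-\tilde p)\,dy$ and $-\int\tilde p\log(\tilde q/\tilde p)\,dy$. The middle terms combine to $\{f_{q,p}(s)-1-\log f_{q,p}(s)\}/s$. By Condition~2, $f_{q,p}(s)=1+f'_{q,p}(0)s+o(s)$, and hence $\log f_{q,p}(s)=f'_{q,p}(0)s+o(s)$. The middle terms are therefore $o(s)/s\to0$. Adding the limits gives the first equality of the lemma.

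For the second equality, I would show that $\int(\tilde q-\tilde p)\,dy=-f'_{q,p}(0)$. Because $\int q_s=\int p_s=1$, we have $\int\{q_s-f_{q,p}(s)p_s\}\,dy=1-f_{q,p}(s)$. Dividing by $s$ and letting $s\downarrow0$ gives $-f'_{q,p}(0)$, again by Condition~2 together with $f_{q,p}(0)=1$. Comparing this with \eqref{cond1} yields the identity, and substituting it into the first expression gives the stated form.

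The main obstacle is only one of bookkeeping: the individual pieces $\int q_s/s$ and $\int p_s/s$ diverge as $s\downarrow0$. The renormalization by $f_{q,p}(s)$ is exactly what keeps the terms finite, because it removes the atom-like mass near $y=0$. I would therefore avoid splitting any integral in a way that produces $\int\tilde p\,dy$ or $\int\tilde q\,dy$ separately, since these may be infinite. Only the combined differences that appear in the conditions are used. All limit interchanges are already packaged into Conditions~1--3, so no further analytic estimates should be needed.
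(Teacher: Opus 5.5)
Your proof is correct and follows the paper's argument essentially step for step. You insert $f_{q,p}(s)$ to split the integrand into three pieces: $\int\{q_s-f_{q,p}(s)p_s\}\,dy$, the term $\{f_{q,p}(s)-1-\log f_{q,p}(s)\}/s$ (which vanishes as $s\downarrow0$), and the renormalized log-ratio term; you then obtain the second equality from $\int\{q_s-f_{q,p}(s)p_s\}\,dy=1-f_{q,p}(s)$, exactly as the paper does. Your added remark that $\int\tilde p\,dy$ and $\int\tilde q\,dy$ must never be separated, since they are infinite for the Gamma L\'{e}vy density, is a useful clarification that the paper leaves implicit.
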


\vspace{0.5cm}
\noindent
Proof.
\begin{align*}
\frac{1}{s} & \int 
\biggl\{q_s(y) - p_s(y) - p_s(y) \log \frac{q_s(y)}{p_s(y)} 
\biggr\} \dd y \\
&= \frac{1}{s} \int 
\biggl[ q_s(y) - f_{q,p}(s) p_s(y)
+ p_s(y) \Bigl\{f_{q,p}(s) - 1 -\log f_{q,p}(s) \bigr\}
- p_s(y) \log \frac{q_s(y)}{f_{q,p}(s) p_s(y)} 
\biggr] \dd y.
\end{align*}
Then,
\begin{align*}
\lim_{s \downarrow 0} \frac{1}{s} \int 
p_s(y) \Bigl\{f_{q,p}(s) - 1 -\log f_{q,p}(s) \bigr\}
\dd y
&=
\lim_{s \downarrow 0} \frac{1}{s}
\Bigl\{f_{q,p}(s) - 1 -\log f_{q,p}(s) \bigr\} \\
&= f'_{q,p}(0) - \frac{f'_{q,p}(0)}{f_{q,p}(0)} = 0.
\end{align*}
From \eqref{cond1} and \eqref{cond2},
\begin{align*}
\lim_{s \downarrow 0} & \frac{1}{s} \int 
\biggl\{q_s(y) - p_s(y) - p_s(y) \log \frac{q_s(y)}{p_s(y)} 
\biggr\} \dd y
= \int 
\biggl\{\tilde{q}(y) - \tilde{p}(y)
- \tilde{p}(y) \log \frac{\tilde{q}(y)}{\tilde{p}(y)} 
\biggr\} \dd y.
\end{align*}
Furthermore,
\begin{align*}
\lim_{s \downarrow 0} \frac{1}{s} \int 
\bigl\{q_s(y) - f_{q,p}(s) p_s(y) \bigr\} \dd y
=\lim_{s \downarrow 0} \frac{1}{s}
\bigl\{1 - f_{q,p}(s) \bigr\}
= -f'_{q,p}(0).
\end{align*}
\begin{flushright}
$\qed$
\end{flushright}

\vspace{0.5cm}

Note that
\begin{align}
\label{qcond1}
\lim_{s \downarrow 0}
\frac{1}{s}
\int
\bigl\{
q_s(y)-p_s(y)
\bigr\}
\,\dd y
&=
\int
\bigl\{
\tilde q(y)-\tilde p(y)
\bigr\}
\,\dd y,
\\
\label{qcond2}
\lim_{s \downarrow 0}
\frac{1}{s}
\int
p_s(y)
\log
\frac{q_s(y)}
     {p_s(y)}
\,\dd y
&=
\int
\tilde p(y)
\log
\frac{\tilde q(y)}
     {\tilde p(y)}
\,\dd y
\end{align}
do not hold in general.
This follows from the fact that the left-hand side of \eqref{qcond1} is identically zero.

\vspace{0.5cm}

The following theorem shows that infinitesimal prediction risk can be expressed as a Kullback--Leibler divergence
between \Levy measures.

\vspace{0.5cm}

\begin{theorem}
Assume the conditions of Lemma \ref{lemma2}.
The risk of infinitesimal prediction is given by
\begin{align}
\E_{x_t \mid \xi} & \left[\int \Bigl\{\tilde{p}_\pi (y \mid x_t) - \tilde{p}_\xi (y)
- \int \tilde{p}_\xi (y) \log
\frac
{\tilde{p}_\pi (y \mid x_t)}
{\tilde{p}_\xi (y)}
 \Bigr\} \dd y
\right]. 
\label{eq:16-1}
\end{align}
\end{theorem}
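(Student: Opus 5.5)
We identify the infinitesimal prediction risk \eqref{ipr} with a limit of one-step Kullback--Leibler risks, and then apply Lemma~\ref{lemma2} conditionally on $x_t$.

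\textbf{Step 1: reduce \eqref{ipr} to a one-step risk.} By the same sufficiency reduction that gave \eqref{sreduction}, with $1$ and $T$ replaced by $t$ and $s$, we have
\[
\E_{x_{t+s}\mid\xi}\Bigl[\log\frac{p_{t+s,\xi}(x_{t+s})}{m_{t+s,\pi}(x_{t+s})}\Bigr]-\E_{x_t\mid\xi}\Bigl[\log\frac{p_{t,\xi}(x_t)}{m_{t,\pi}(x_t)}\Bigr]
=\E_{x_t\mid\xi}\Bigl[D\{p_{s,\xi}(\cdot),\,p_{s,\pi}(\cdot\mid x_t)\}\Bigr],
\]
where $p_{s,\pi}(y\mid x_t)$ is the Bayesian predictive density of $y_s=x_{t+s}-x_t$. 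Dividing by $s$ and letting $s\downarrow0$ shows that \eqref{ipr} equals
\[
\lim_{s\downarrow0}\frac1s\,\E_{x_t\mid\xi}\bigl[D\{p_{s,\xi},p_{s,\pi}(\cdot\mid x_t)\}\bigr].
\]
Since both densities integrate to one, we may write
\[
D\{p_s,q_s\}=\int\Bigl\{q_s-p_s-p_s\log\frac{q_s}{p_s}\Bigr\}\dd y .
\]
This is exactly the form treated in Lemma~\ref{lemma2}.

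\textbf{Step 2: apply Lemma~\ref{lemma2} for fixed $x_t$.} Take $p_s=p_{s,\xi}$, with $\pi_p$ the point mass at $\xi$, and $q_s=p_{s,\pi}(\cdot\mid x_t)$, with $\pi_q=\pi(\cdot\mid x_t)$. Both are mixtures over $\mathcal P_s$, which is what the lemma requires. Because the posterior does not depend on $s$, Condition~1 gives the limits
\[
\tilde p=\tilde p_\xi,\qquad \tilde q=\lim_{s\downarrow0}s^{-1}\!\int p_{s,\xi'}\,\pi(\xi'\mid x_t)\,\dd\xi'=\tilde p_\pi(y\mid x_t).
\]
The second identification needs an interchange of limit and integral, which we justify by dominated convergence under the assumed conditions. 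Lemma~\ref{lemma2} then yields, pointwise in $x_t$,
\[
\lim_{s\downarrow0}\frac1s D\{p_{s,\xi},p_{s,\pi}(\cdot\mid x_t)\}=\int\Bigl\{\tilde p_\pi(y\mid x_t)-\tilde p_\xi(y)-\tilde p_\xi(y)\log\frac{\tilde p_\pi(y\mid x_t)}{\tilde p_\xi(y)}\Bigr\}\dd y .
\]

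\textbf{Step 3: take the expectation over $x_t$.} The main obstacle is exchanging $\lim_{s\downarrow0}$ with $\E_{x_t\mid\xi}$, together with identifying the limit with the derivative in \eqref{ipr}. The latter is a right derivative, and becomes a two-sided derivative when the limit is continuous in $t$. For the exchange, we would first use convexity of $s\mapsto$ (cumulative risk). The risk is nondecreasing in $s$, and by the integral representation \eqref{integralrepresentation} it is an integral of the infinitesimal risk, so $s^{-1}\E[D]$ is an average of that integrand over $[t,t+s]$. This lets us pass to the limit by continuity in $t$. If needed, we would instead bound $s^{-1}D$ uniformly by an integrable function of $x_t$, using the explicit structure in the Gamma case, and apply dominated convergence. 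Taking expectations gives \eqref{eq:16-1}.
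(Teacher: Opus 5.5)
Your Steps~1--2 follow the paper's proof. The paper applies Lemma~\ref{lemma2} with $p_s=p_{s,\xi}$ and $q_s=p_{s,\pi}(\cdot\mid x)$ for fixed $x$ and then takes $\E_{x\mid\xi}$. It leaves implicit both your Step~1 and the exchange of $\lim_{s\downarrow0}$ with that expectation.

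The gap is in Step~3, where you try to justify this exchange. Put $F(\tau)=\E_{x_\tau\mid\xi}[\log\{p_{\tau,\xi}(x_\tau)/m_{\tau,\pi}(x_\tau)\}]$. Writing $s^{-1}\E[D]$ as an average of $F'$ over $[t,t+s]$ only shows again that $\lim_{s\downarrow0}s^{-1}\E[D]=F'(t)$, which is Step~1. It says nothing about $\E[\lim_{s\downarrow0}s^{-1}D]$, the quantity Lemma~\ref{lemma2} identifies. If the integrand is instead meant to be the expected L\'evy divergence, the argument assumes its conclusion. The convexity claim is also backwards. For the Gamma model with the Jeffreys prior, $F'(\tau)=\alpha\{\alpha\tau\psi'(\alpha\tau)-1\}$ decreases in $\tau$, because $a\psi'(a)$ is decreasing, so the cumulative risk is concave in $s$. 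In any case, a shape property of the averaged risk gives no monotonicity of $s^{-1}D$ pointwise in $x_t$, which monotone convergence would need.

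What you get for free is one inequality. Since $s^{-1}D\ge0$ converges pointwise by Lemma~\ref{lemma2}, Fatou's lemma gives \eqref{eq:16-1}${}\le{}$\eqref{ipr}. Equality needs uniform integrability in $x_t$ of the family $s^{-1}D\{p_{s,\xi},p_{s,\pi}(\cdot\mid x_t)\}$, $0<s<s_0$. That is not among the conditions of Lemma~\ref{lemma2}, so it must be added as a hypothesis or checked model by model, as in your fallback.

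Two further identifications are also extra assumptions:
\begin{itemize}
\item In Step~2, Condition~1 only says that $\lim_{s\downarrow0}s^{-1}p_{s,\pi}(y\mid x_t)$ exists and supplies no dominating function. Equating this limit with $\int\tilde p_{\xi'}(y)\pi(\xi'\mid x_t)\,\dd\xi'$ is therefore assumed, not derived; the paper checks it for the Gamma case in \eqref{BPL}.
\item Step~1 needs sufficiency of $x_{t+s}$, i.e.\ the Esscher structure, which Lemma~\ref{lemma2} explicitly does not assume.
\end{itemize}

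The paper passes over all of these points silently, so the gaps are shared, but your Step~3 does not close them. In the Gamma--Jeffreys case they can be closed by direct computation: using Frullani integrals, $\E_{x_t\mid\beta}$ of the L\'evy divergence equals $\alpha\{\alpha t\psi'(\alpha t)-1\}=F'(t)$.
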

\vspace{0.5cm}

\begin{proof}
By Lemma~\ref{lemma2},
\[
\frac{1}{s}
\int
\left\{
p_{s,\pi}(y\mid x)
-
p_{s,\xi}(y)
-
p_{s,\xi}(y)
\log
\frac{p_{s,\pi}(y\mid x)}
     {p_{s,\xi}(y)}
\right\}
dy
\]
converges, as $s\downarrow0$, to
\[
\int
\left\{
\tilde p_\pi(y\mid x)
-
\tilde p_\xi(y)
-
\tilde p_\xi(y)
\log
\frac{\tilde p_\pi(y\mid x)}
     {\tilde p_\xi(y)}
\right\}
dy .
\]
Taking expectation with respect to $p_\xi(x)$
yields \eqref{eq:16-1}.
\end{proof}

\vspace{0.5cm}
\begin{example}
Consider the Gamma process $\mathrm{Ga}(\alpha t, \beta)$.
Let
\begin{align*}
p_s(y) \dd y &\coloneqq p_{s,\beta} (y) \frac{\dd y}{y}
= \frac{1}{\Gamma (\alpha s)} \frac{y^{\alpha s -1}}{\beta^{\alpha s}} \exp \left(- \frac{y}{\beta} \right) \dd y,
\end{align*}
and
\begin{align*}
q_s(y) \dd y &\coloneqq p_{s,\mathrm{J}} (y \mid x) \frac{\dd y}{y}
= \frac{\Gamma (\alpha + \alpha s)}{\Gamma (\alpha s) \Gamma (\alpha)}
\frac{x^{\alpha} y^{\alpha s}}{(x + y)^{\alpha + \alpha s}}  \frac{\dd y}{y}.
\end{align*}
Then,
\begin{align*}
\tilde{p}(y) \dd y &= \frac{\alpha}{y} \exp \left(- \frac{y}{\beta} \right) \dd y, ~~\mbox{and}~~~
\tilde{q}(y) \dd y = \frac{\alpha}{y} 
\biggl( \frac{x}{x + y} \biggr)^{\alpha} \dd y.
\end{align*}
Since
\[
\frac{q_s}{p_s}(y)
=
\frac{
\dfrac{\Gamma(\alpha+\alpha s)}
{\Gamma(\alpha s)\Gamma(\alpha)}
\dfrac{x^\alpha y^{\alpha s-1}}
{(x+y)^{\alpha+\alpha s}}
}{
\dfrac1{\Gamma(\alpha s)}
\dfrac{y^{\alpha s-1}}{\beta^{\alpha s}}
\exp\!\left(-\dfrac{y}{\beta}\right)
},
\]
we have
\[
f_{q,p}(s) = \frac{q_s}{p_s}(0)
=
\frac{\Gamma(\alpha+\alpha s)}
{\Gamma(\alpha)}
\left(\frac{\beta}{x}\right)^{\alpha s}.
\]

Then,
\begin{align*}
q_s(y) - f_{q,p}(s)p_s(y) &= \frac{\Gamma(\alpha+\alpha s)}
{\Gamma(\alpha s)\Gamma(\alpha)}
\frac{x^\alpha y^{\alpha s-1}}
{(x+y)^{\alpha+\alpha s}}
-
\frac{\Gamma(\alpha+\alpha s)}{\Gamma(\alpha)}
\left(\frac{\beta}{x}\right)^{\alpha s}
\frac1{\Gamma(\alpha s)}
\left(\frac{y}{\beta}\right)^{\alpha s-1}
\exp\!\left(-\frac{y}{\beta}\right) \\
&=
\frac{\Gamma(\alpha+\alpha s)}
{\Gamma(\alpha s)\Gamma(\alpha)}
\left(\frac{y}{x}\right)^{\alpha s-1}
\left\{
\frac1{(1+y/x)^{\alpha+\alpha s}}
-
\exp\!\left(-\frac{y}{\beta}\right)
\right\}
\end{align*}
Condition \eqref{cond1} is a consequence of the dominated convergence theorem.

Moreover,
\begin{align*}
p_s(y) \log \frac{q_s(y)}{f_{q,p}(s)p_s(y)}
&=
\frac{1}{s}
\frac{1}{\Gamma(\alpha s)}
\frac{y^{\alpha s-1}}{\beta^{\alpha s}}
\exp\!\left(-\frac{y}{\beta}\right)
\log
\frac{
\frac{\Gamma(\alpha+\alpha s)}
{\Gamma(\alpha s)\Gamma(\alpha)}
\frac{x^\alpha y^{\alpha s-1}}
{(x+y)^{\alpha+\alpha s}}
}{
\frac{\Gamma(\alpha+\alpha s)}{\Gamma(\alpha)}
\left(\dfrac{\beta}{x}\right)^{\alpha s}
\frac{1}{\Gamma(\alpha s)}
\frac{y^{\alpha s-1}}{\beta^{\alpha s}}
\exp\!\left(-\dfrac{y}{\beta}\right)
} \\
&=
\frac{\alpha}{\Gamma(\alpha s+1)}
\frac{(y/\beta)^{\alpha s}}{y}
\exp\!\left(-\frac{y}{\beta}\right)
\log
\left\{
\left(\frac{x}{x+y}\right)^{\alpha+\alpha s}
\exp\!\left(\frac{y}{\beta}\right)
\right\}
\end{align*}
Condition \eqref{cond2} follows from the dominated convergence theorem.

Therefore, for the Gamma process model,
\eqref{ipr} coincides \eqref{eq:16-1}.
\qed
\end{example}

\section{Mean mixture curvature}

As we have seen in the previous section, the behavior of infinitesimal prediction
in the Gamma model is markedly different from that in the normal and Poisson models.

The mean mixture curvature is an information-geometric quantity that characterizes
the asymptotic difference between a Bayesian predictive density and a plug-in predictive density.
The larger the mean mixture curvature, the larger the asymptotic improvement achieved
by Bayesian prediction over plug-in prediction \citep{K96}.

In this section,
we compare the mean mixture curvatures
of the normal, Poisson, and Gamma models.
For the definitions of the Fisher metric $g_{ij}$
and the mixture connection coefficients
$\Gamma^{(m)}_{ij,k}$,
see \cite{Amari85}.

\begin{example} Normal model

Consider the normal model
\begin{align*}
p_s(x \mid \mu)
=
\sqrt{\frac{s}{2 \pi}}
\exp \left\{
- \frac{s}{2} (x - \mu)^2
\right\}.
\end{align*}
The Fisher information is
$g_{\mu \mu} = s$.
Since $\mu$ is the expectation parameter,
$\mGamma{\mu \mu}{~~\mu} = 0$.
Moreover,
$\partial_\mu p_s = s (x - \mu) p_s$,
$\partial_\mu \partial_\mu p_s
=
\{s^2 (x - \mu)^2 - s \} p_s$,
and
$g^{\mu \mu} (\partial_\mu \partial_\mu p_s - \Gamma_{\mu \mu}^{~~\mu} \partial_\mu p_s)
=
\{s (x - \mu)^2 - 1 \} p_s$,
where $g^{\mu \mu} = 1/g_{\mu \mu}$.
Since the fourth moment is
$\E_{s,\mu}[(x-\mu)^4]=3/s^2$,
the mean mixture curvature is
\begin{align*}
\int \Bigl\{ g^{\mu\mu} (\partial_\mu \partial_\mu p_s - \Gamma_{\mu \mu}^{~~\mu} \partial_\mu p_s) \Bigr\}^2
\frac{1}{p_s} \dd x
=
\E_{s,\mu}
\Bigl[
\{s (x - \mu)^2 - 1 \}^2
\Bigr]
&=
s^2 \frac{3}{s^2} - 2 s \frac{1}{s} + 1 = 2.
\end{align*}
Thus,
the mean mixture curvature does not depend on $s$.
\qed
\end{example}

\vspace{0.5cm}
\begin{example}
Poisson model

Consider the Poisson model
\begin{align*}
p_s(x \mid \lambda)
=
\frac{(s \lambda)^x}{x !} \ee^{- s \lambda}.
\end{align*}
The Fisher information is
$g_{\lambda \lambda}
=s/\lambda$.
Since $\lambda$ is the expectation parameter, $\mGamma{\lambda \lambda}{~~ \lambda} = 0$.
Moreover,
\begin{align*}
\partial_\lambda p_s 
&=
\frac{s (s \lambda)^{x-1}}{(x - 1) !} \ee^{- s \lambda} \1_{x \geq 1}
 - s \frac{(s \lambda)^x}{x !} \ee^{- s \lambda}, \\
\partial_\lambda \partial_\lambda p_s 
&=
\frac{s^2 (s \lambda)^{x - 2}}{(x - 2) !} \ee^{- s \lambda} \1_{x \geq 2}
- 2 \frac{s^2 (s \lambda)^{x-1}}{(x - 1) !} \ee^{- s \lambda} \1_{x \geq 1}
+ s^2 \frac{(s \lambda)^x}{x !} \ee^{- s \lambda}.
\end{align*}
and
\[
g^{\lambda \lambda}
\Bigl( \partial_\lambda \partial_\lambda p_s -
\mGamma{\lambda \lambda}{~~\lambda}
\partial_\lambda p_s \Bigr)
=
\frac{ (s \lambda)^{x - 1}}{(x - 2) !} \ee^{- s \lambda}
\1_{x \geq 2}
- 2 \frac{(s \lambda)^{x}}{(x - 1) !}
\ee^{- s \lambda} \1_{x \geq 1}
+ \frac{(s \lambda)^{x+1}}{x!} \ee^{- s \lambda},
\]
where $g^{\lambda \lambda} = 1/g_{\lambda \lambda}$.
Since
$\E_{s,\lambda} [ x^2 ] = s\lambda (1 + s\lambda)$,
$\E_{s,\lambda} [ x^3 ] = s\lambda (1 + 3 s\lambda + s^2\lambda^2)$,
and $\E_{s,\lambda} [x^4] = s\lambda (1 + 7 s\lambda + 6 s^2\lambda^2 + s^3\lambda^3)$,
the mean mixture curvature is
\begin{align*}
\sum_x &
\Bigl\{
g^{\lambda \lambda}
\bigl(
\partial_\lambda \partial_\lambda p_s
-
\mGamma{\lambda \lambda}{~~\lambda}
\partial_\lambda p_s
\bigr)
\Bigr\}^2 \frac{1}{p_s}
=\sum_x
\left\{
(s \lambda)^{-1} x(x - 1)
- 2 x
+ s \lambda
\right\}^2
\frac{(s \lambda)^x}{x !} \ee^{- s \lambda} \\
&= \E_{s,\lambda}
\left[
\frac{x^2 (x - 1)^2}{(s \lambda)^2}
+ 4x^2
+ (s\lambda)^2
- \frac{4}{s \lambda} x^2 (x - 1)
+ 2 x (x -1)
- 4(s \lambda)x
\right]
= 2.
\end{align*}
Hence, the mean mixture curvature does not depend on $s$.
\qed
\end{example}

\vspace{0.5cm}
\begin{example}
Gamma model

Consider the Gamma model
\begin{align*}
p_s(x \mid \beta) \frac{\dd x}{x}
&= \frac{1}{\Gamma (s)}
\frac{x^s}{\beta^s}
\exp \left(- \frac{x}{\beta} \right)
\frac{\dd x}{x}.
\end{align*}
The Fisher information is $g_{\beta \beta} = s/\beta^2$.
Since $\beta$ is the expectation parameter,
$\mGamma{\beta \beta}{~ \beta} = 0$.

Moreover,
\begin{align*}
\partial_\beta p_s
&=
\left(
\frac{x}{\beta^2} - \frac{s}{\beta}
\right) p_s, \\
\partial_\beta \partial_\beta p_s
&=
\left\{
- 2 \frac{x}{\beta^3}
+ \frac{s}{\beta^2}
+ \left(
\frac{x}{\beta^2} - \frac{s}{\beta}
\right)^2
\right\} p_s
=
\left\{
\frac{x^2}{\beta^4}
- \frac{2}{\beta^3} (s + 1) x
+ \frac{1}{\beta^2} (s^2 + s)
\right\} p_s,
\end{align*}
and
\begin{align*}
g^{\beta \beta}
\Bigl(\partial^2_\beta p_s
-
\mGamma{\beta \beta}{~\beta} \partial_\beta p_s \Bigr)
&=
g^{\beta \beta} \partial_\beta \partial_\beta p_s
=
\left\{
\frac{x^2}{s \beta^2}
- \frac{2}{\beta} \left(1 + \frac{1}{s} \right) x
+ (s + 1)
\right\} p_s,
\end{align*}
where $g^{\beta \beta} = 1/g_{\beta \beta}$.

The moments are given by
\begin{align*}
\E_{s,\beta} [x^k]
&=
\int x^k
\frac{1}{\Gamma (s)}
\frac{x^s}{\beta^s}
\exp \left(- \frac{x}{\beta} \right)
\frac{1}{x} \dd x
=
\beta^k \frac{\Gamma (k + s)}{\Gamma (s)}
=
(k + s - 1) \dotsb s \beta^k.
\end{align*}
In particular,
$\E_{s,\beta} [x] = s \beta$,
$\E_{s,\beta} [x^2] = \beta^2 (s^2 + s)$,
$\E_{s,\beta} [x^3] = (s^3 + 3 s^2 + 2 s) \beta^3$, and
$\E_{s,\beta} [x^4] = (s^4 + 6 s^3 + 11 s^2 + 6s) \beta^4$.

Hence,
the mean mixture curvature is
\begin{align*}
\E_{s,\beta}
\biggl[
\Bigl\{
\frac{x^2}{s \beta^2}
- \frac{2}{\beta}
\Bigl(1 + \frac{1}{s} \Bigr) x
+ (s + 1)
\Bigr\}^2
\biggr]
=
2 \frac{1+s}{s}.
\end{align*}
Therefore,
the mean mixture curvature diverges to infinity
as $s \downarrow 0$.
\qed
\end{example}

\vspace{0.5cm}

The substantially different behavior of the mean mixture curvature
in the Gamma model,
compared with the normal and Poisson models,
particularly when the amount of observational information is small,
may reflect the qualitative difference
between prediction and estimation in these models.

\section{Conclusion}

In this paper, we investigated estimation and prediction in Gamma models under the Kullback--Leibler loss.

Our main result is that the Bayesian predictive density based on the Jeffreys prior is admissible for all $\alpha>0$. This resolves the admissibility problem for Bayesian predictive densities in Gamma models. As a related result, we also established the admissibility of the corresponding Bayesian estimator for $\alpha>1$.

To prove the predictive admissibility result, we developed an infinitesimal prediction framework based on Gamma processes. This framework led naturally to a Kullback--Leibler loss for \Levy densities and to the notion of a Bayesian predictive \Levy density. We showed that the Bayesian predictive \Levy density is given by the posterior mean \Levy density under the corresponding loss.

Finally, we discussed this phenomenon from an information-geometric viewpoint through the mean mixture curvature. The results suggest that infinitesimal prediction provides a useful perspective for understanding the interplay among Bayesian prediction, infinitely divisible distributions, and information geometry.

\section*{Acknowledgements}

This work was supported by JSPS KAKENHI Grant Numbers 22H00510 and 25H01156.

\end{document}